\theoremstyle{plain}
\newtheorem{theorem}{Theorem}[section]
\newtheorem{lemma}[theorem]{Lemma}
\newtheorem{corollary}[theorem]{Corollary}
\newtheorem{proposition}[theorem]{Proposition}
\theoremstyle{definition}
\newcommand\Tstrut{\rule{0pt}{2.6ex}}         
\newtheoremstyle{TheoremNum}
	{\topsep}{\topsep}              
  {\itshape}                      
  {}                              
  {\bfseries}                     
  {.}                             
  { }                             
  {\thmname{#1}\thmnote{ \bfseries #3}}
\newtheorem{remark}{Remark}
\newcommand{\F}{\mathbb F}
\newcommand{\K}{\mathbb K}
\newcommand{\Z}{\mathbb Z}
\newcommand{\cD}{\mathcal D}
\newcommand{\cL}{\mathcal L}
\newcommand{\cV}{\mathcal V}
\newcommand{\cB}{\mathcal B}
\newcommand{\U}{\mathcal U}
\newcommand{\rS}{\mathscr S}
\newcommand{\Oval}{\mathcal O}
\newcommand{\bbS}{\mathbb S}
\newcommand{\bbP}{\mathbb P}
\newcommand{\Aut}{\mathrm{Aut}}
\newcommand{\PG}{\mathrm{PG}}
\newcommand{\Tr}{ \ensuremath{ \mathrm{Tr}}}
\newcommand{\autfix}[2]{\Gamma_{(#1,~#2)}}
\newcommand{\RN}[1]{%
  \textup{\uppercase\expandafter{\romannumeral#1}}%
}
\newcommand{\rn}[1]{%
  \textup{\lowercase\expandafter{\romannumeral#1}}%
}
 \def\zhou#1 {\fbox {\footnote {\ }}\ \footnotetext { From Yue: {\color{red}#1}}}
 \def\rocco#1 {\fbox {\footnote {\ }}\ \footnotetext { From Rocco: {\color{blue}#1}}}
\begin{document}
	\title{Hyperovals in Knuth's binary semifield planes}
	\author[N. Durante]{Nicola Durante}
	\address{Dipartimento di Mathematica e Applicazioni ``R. Caccioppoli", Universit\`{a} degli Studi di Napoli ``Federico \RN{2}", I-80126 Napoli, Italy}
	\email{ndurante@unina.it}
	\author[R. Trombetti]{Rocco Trombetti}
	\address{Dipartimento di Mathematica e Applicazioni ``R. Caccioppoli", Universit\`{a} degli Studi di Napoli ``Federico \RN{2}", I-80126 Napoli, Italy}
	\email{rtrombet@unina.it}
	\author[Y. Zhou]{Yue Zhou}
	\address{Dipartimento di Mathematica e Applicazioni ``R. Caccioppoli", Universit\`{a} degli Studi di Napoli ``Federico \RN{2}", I-80126 Napoli, Italy}
	\curraddr{Lehrstul f\"{u}r Diskrete Mathematik, Optimierung, und Operations Research, Universit\"{a}t Augsburg, 86135 Augsburg, Germany}
	\email{yue.zhou.ovgu@gmail.com}
	\date{\today}
	\maketitle
	
	\begin{abstract}
	In each of the three projective planes coordinatised by the Knuth's binary semifield $\K_n$ of order $2^n$ and two of its Knuth derivatives, we exhibit a new family of infinitely many translation hyperovals. In particular, when $n=5$, we also present complete lists of all translation hyperovals in them. The properties of some designs associated with these hyperovals are also studied.
	\end{abstract}

\section{Introduction}
A \emph{semifield} $\bbS$ is an algebraic structure satisfying all the axioms of a skewfield except (possibly) the associativity. A finite field is a trivial example of a semifield. Furthermore, if $\bbS$ does not necessarily have a multiplicative identity, then it is called a \emph{presemifield}. In particular, when the multiplication of $\bbS$ is commutative, we also say that $\bbS$ is \emph{commutative}.

For a presemifield $\bbS$, its additive group $(\bbS,+)$ is necessarily abelian \cite{knuth_finite_1965}. The multiplication of $\bbS$ is not necessarily commutative or associative. However, by Wedderburn's Theorem, in the finite case, associativity implies commutativity. Therefore, a non-associative finite commutative semifield is the closest algebraic structure to a finite field.

Geometrically speaking, there is a well-known correspondence, via coordinatisation, between (pre)semifields and projective planes of Lenz-Barlotti type V.1, see \cite{dembowski_finite_1997,hughes_projective_1973}. In \cite{albert_finite_1960}, Albert showed that two (pre)semifields coordinatise isomorphic planes if and only if they are \emph{isotopic}. Any presemifield can always be ``normalized" into a semifield under a certain isotopism, which implies that there is no essential difference between semifields and presemifields. An isotopism from a semifield to itself is called an \emph{autotopism}, and all autotopisms of a semifield form a group. We refer to \cite{lavrauw_semifields_2011} for a recent and comprehensive survey on semifields.

In \cite{knuth_class_1965}, Knuth presented a family of binary presemifields defined over $\F_{2^{mn}}$ where $n$ is odd. The multiplication in it is defined by
\[x*y := xy + \left(y\Tr_{2^{mn}/2^m}(x)+x\Tr_{2^{mn}/2^m}(y)\right)^2,\]
where $\Tr_{2^{mn}/2^m}(\cdot)$ is the trace function from $\F_{2^{mn}}$ to $\F_{2^m}$. It is clear that the multiplication $x*y$ is commutative, i.e.\ $x*y=y*x$. In this paper, we concentrate on the Knuth's binary presemifields with $m=1$ and denote them by $\K_{n}$. For convenience, we write $\Tr_{2^n/2}(x)$ as $\Tr(x)$.

Let $\Pi$ be a finite projective plane of order $q$. An \emph{oval} is a set of $q+1$ points, no three of which are collinear. Due to Segre's theorem \cite{segre_ovals_1955}, every oval in $\PG(2,q)$ with $q$ odd is equivalent to a nondegenerate conic in the plane. When $q$ is even, every oval can be uniquely extended to a set of $q+2$ points, no three of which are still collinear. We call this set of $q+2$ points a \emph{hyperoval}. Equivalently, we can also say a hyperoval is a \emph{maximal arc} of degree $2$. Dually, a \emph{line hyperoval} is a set of $q+2$ lines, no three of which are concurrent.

From the definition of hyperovals, it is easy to show that each line in $\Pi$ meets a hyperoval at either $0$ or $2$ points, and the line is called \emph{exterior} or \emph{secant}, respectively.  For two different hyperovals $\Oval_1$ and $\Oval_2$ in $\Pi$, we say that they are \emph{equivalent} if there is a collineation $\varphi$ of the plane such that $\varphi(\Oval_1)=\Oval_2$.

When $q$ is even, a conic together with its nucleus in $\PG(2,q)$ is a \emph{regular hyperoval} (also known as a complete conic). Different from the $q$ odd case, for $q$ even, there are several inequivalent hyperovals in $\PG(2,q)$ besides the regular hyperovals. Finding these hyperovals has been a hard work of almost 40 years and a complete classification of hyperovals in $\PG(2,q)$ seems elusive. We refer to \cite{cherowitzo_hyperovals_1996} and \cite{caullery_classification_2015,hernando_proof_2012} for a list of known hyperovals in $\PG(2,q)$ and recent classification results, respectively.

Let $\Pi$ be a translation plane of even order $q$ with a translation line $l_\infty$. A hyperoval $\Oval$ is called a \emph{translation hyperoval}, if
\begin{itemize}
	\item the line $l_\infty$ is secant to $\Oval$,
	\item there exists a subgroup of order $q$ in the translation group acting regularly on the $q$ affine points of $\Oval$. 
\end{itemize}

The set $\l_\infty \cap \Oval$ is also known as the {\it carrier set} of the hyperoval. When $\Pi$ is $\PG(2,q)$ and $q=2^n$, without loss of generality, we can always assume that $(\infty)$, $(0)$ and $(0,0)$ are points of the hyperoval $\Oval$ and the affine points of $\Oval$ can be written as 
\[\{(x,f(x)) : x\in \F_q  \}.\]
In \cite{hirschfeld_ovals_1975,payne_complete_1971}, it has been shown that a function $f$ defines a translation hyperoval in $\PG(2,q)$ if and only if $f(x)=x^{2^k}$ where $\gcd(n,k)=1$. It implies a complete classification of translation hyperovals in desarguesian planes.

There are a few known hyperovals in non-desarguesian planes of even orders, which were discovered approximately 20 years ago. In generalized Andr\'{e} planes, there are translation hyperovals inherited from those in desarguesian planes; see \cite{denniston_non-desarguesian_1979,jha_ubiquity_1992}. In a Hall plane of order $q^2$, the affine points of certain conics in $\PG(2,q^2)$ can also be extended into hyperovals; see \cite{korchmaros_inherited_1986,okeefe_hyperovals_1992}. In finite Figueroa planes of even order, there are hyperovals inherited from regular ones in the associated desarguesian planes; see \cite{de_resmini_hyperovals_1998}. A complete list of hyperovals in all projective planes of order $16$ is presented in \cite{penttila_hyperovals_1996}, in which it is also shown that there do exist projective planes containing no hyperovals. 

In this paper, we present three new families of translation hyperovals; one in the semifield plane $\Pi(\K_n)$ coordinated by the Knuth's binary presemifield $\K_n=(\F_{2^n},+,\ast)$, one in $\Pi(\K^t_n)$ and another one in $\Pi(\K_n^{td})$, where $t$ and $d$ are the {\it transpose} and the {\it dual} operation on $\K_n$, respectively. In particular, in each relevant plane we give a complete list of translation hyperovals, when $n=5$.

Throughout this paper, we denote the points and the lines of a semifield plane $\Pi(\bbS)$ in the following way: The set of affine points in $\Pi(\bbS)$ is $\{(a,b):  a, b\in \F_q\}$. There are another $q+1$ points on the line $l_\infty$ at infinity, and we denote them by $(a)$, where $a\in \F_q\cup \{\infty\}$. Symmetrically, all the $q+1$ lines through $(\infty)$ are denoted by $l_a$, where $a\in \F_q\cup \{\infty \}$. The affine points on $l_a$ are $(a,y)$ for all $y\in \F_q$. For any $a,b\in \F_q$, the line $l_{a,b}$ is defined by
\[l_{a,b} :=\{(x,y): y=x\star a + b\} \cup \{(a)\}, \]
where $\star$ is the multiplication in $\bbS$.

The rest of this paper is organized as follows: In Section \ref{se:commutative}, we first classify the translation hyperovals in semifield planes into two types (a) and (b). Then we concentrate on hyperovals in $\Pi(\K_n)$, and we present a new family of translation hyperovals of type (b) in it. Moreover, we give a complete list of translation hyperovals in $\Pi(\K_5)$. In Section \ref{se:symplectic}, instead of studying translation hyperovals in $\Pi(\K_n^t)$, we equivalently look at a certain type of line hyperovals in its dual plane $\Pi(\K_n^{td})$. A new family of line hyperovals in $\Pi(\K_n^{td})$ and a complete list of them in $\Pi(\K_5^{td})$ are both presented. In Section \ref{se:symplectic_hyperoval}, we also get a new family of translation hyperovals in $\Pi(\K_n^{td})$ and a complete list of them in $\Pi(\K_5^{td})$. Finally in Section \ref{se:designs}, we turn to the hyperovals obtained by applying collineations on the translation hyperovals in semifield planes 
 and the sizes of pairwise intersections of them. We prove that, for each hyperoval of type (a) in a semifield plane, there is a symmetric design as well as two associated difference sets with the same parameters $\left(q^2, {q^2}/{2} + {q}/{2}, {q^2}/{4} + {q}/{2}\right)$, which are defined in two different abelian groups.



\section{Hyperovals in $\Pi(\K_n)$}\label{se:commutative}

In \cite[Corollary 2]{jha_oval_1996}, it was proven that in every plane $\Pi(\bbP)$ coordinatised by a commutative (pre)semifield $(\bbP,+,\star)$ of even order there is a translation oval, whose set of affine points is 
\begin{equation}\label{set:standardovfal}
\{(x,x\star x): x\in \F_q \}.
\end{equation} 
In the same article the authors refer to such an oval as the {\it standard oval}. Naturally, one can always complete the standard oval into a hyperoval $\Oval_s$; for instance, by adding $\{(0), (\infty)\}$ as a carrier set. In  \cite{de_resmini_arcs_2002}, de Resmini, Ghinelli and Jungnickel used relative difference set to prove that in every commutative semifield plane, there is an oval, which is equivalent to the oval defined by \eqref{set:standardovfal}.

When $\bbP$ is a finite field, clearly $\Oval_s$ is the hyperoval comprising the points of a conic and its nucleus in $\PG(2,q)$, i.e., a regular hyperoval. 

As the presemifield $\K_n$ is commutative, if we replace $\star$ by its multiplication $*$ in \eqref{set:standardovfal},  we get a hyperoval $\Oval_s$ in $\Pi(\K_n)$.

In this section, we proceed to show new hyperovals in $\Pi(\K_n)$. To consider the equivalence between different hyperovals, we need to know the automorphism group of the projective plane $\Pi(\K_n)$. As Hughes and Piper showed in \cite[Lemma 8.4 and Theorem 8.6]{hughes_projective_1973}, the automorphism group of a semifield plane can be decomposed in the following way:
\begin{theorem}\label{th:automorphism_semifieldplane}
    Let $\Pi(\bbS)$ be a semifield plane defined by a semifield $\bbS$. Let $\autfix{l_\infty}{l_\infty}$ be its translation group and $\autfix{(\infty)}{l_0}$ the group of \emph{shears} (automorphisms fixing $(\infty)$ and $l_0$). Let $\Aut(\bbS)$ and $\Aut(\Pi(\bbS))$ denote the autotopism group of $\bbS$ and the automorphism group of $\Pi(\bbS)$, respectively. Then we have
    \begin{itemize}
        \item $\Sigma := \autfix{l_\infty}{l_\infty}\rtimes \autfix{(\infty)}{l_0}$ is a subgroup of $\Aut(\Pi(\bbS))$;
        \item $\Aut(\Pi(\bbS))\cong\Sigma \rtimes \Aut(\bbS)$.
    \end{itemize}
\end{theorem}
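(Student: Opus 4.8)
The plan is to use that $\Pi(\bbS)$ is a translation plane with translation line $l_\infty$ and then to dissect the stabiliser of the origin $O=(0,0)$. For the first assertion, recall from the standard theory of translation planes that $T:=\autfix{l_\infty}{l_\infty}$, the group of elations with axis $l_\infty$ (explicitly the maps $\tau_{a,b}\colon(x,y)\mapsto(x+a,y+b)$), is a normal subgroup of $\Aut(\Pi(\bbS))$ acting regularly on the affine points. The shears are $S:=\autfix{(\infty)}{l_0}=\{\sigma_c\colon(x,y)\mapsto(x,\,y+x\star c)\mid c\in\F_q\}$; using the left distributive law $x\star(a+c)=x\star a+x\star c$ one checks directly that $\sigma_c$ sends $l_{a,b}$ to $l_{a+c,b}$, hence is a collineation fixing $(\infty)$ and fixing $l_0$ pointwise (an elation with centre $(\infty)$ and axis $l_0$), and that $c\mapsto\sigma_c$ is an isomorphism onto a subgroup of order $q$. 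Since $T$ is normal it is normalised by $S$, and $T\cap S=\{1\}$ because a nonidentity elation has a unique axis and $l_\infty\neq l_0$. Therefore $\Sigma=\langle T,S\rangle=T\rtimes S$ is a subgroup, which is the first bullet.

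For the second assertion, regularity of $T$ on the affine points gives $\Aut(\Pi(\bbS))=T\rtimes\Aut(\Pi(\bbS))_O$, so it suffices to prove $\Aut(\Pi(\bbS))_O=S\rtimes\Aut(\bbS)$. The key geometric input is that, in a proper semifield plane, both $l_\infty$ and the point $(\infty)$ are \emph{characteristic}: $l_\infty$ is the unique translation axis, and $(\infty)$ is its distinguished dual counterpart (the unique centre of a nontrivial group of elations with an affine axis). Granting this, every $\psi\in\Aut(\Pi(\bbS))_O$ fixes $O$, $l_\infty$ and $(\infty)$, and maps $(0)$ to some $(m)$. Because the shears act on the affine points $\{(a):a\in\F_q\}$ of $l_\infty$ as the regular translation $(a)\mapsto(a+c)$, there is a unique $\sigma_m\in S$ for which $\sigma_m\psi$ also fixes $(0)$; thus $\sigma_m\psi$ fixes the coordinate triangle $\{O,(\infty),(0)\}$.

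It then remains to identify this frame stabiliser with $\Aut(\bbS)$. By the coordinatisation theorem (Albert's isotopy correspondence), a triple $(F,G,H)$ of additive bijections of $\bbS$ with $H(x\star a)=F(x)\star G(a)$ induces the collineation $(x,y)\mapsto(F(x),H(y))$, $(a)\mapsto(G(a))$, which fixes the frame, and conversely every frame-fixing collineation arises this way, so $\Aut(\Pi(\bbS))_{O,(\infty),(0)}\cong\Aut(\bbS)$. Combined with the previous step this gives $\Aut(\Pi(\bbS))_O=S\cdot\Aut(\bbS)$, with $S\cap\Aut(\bbS)=\{1\}$ since an autotopism collineation fixes $(0)$ while a nontrivial shear does not; and because $S$ consists of elations with centre $(\infty)$ and axis $l_0=\langle O,(\infty)\rangle$, both fixed by the frame stabiliser, $\Aut(\bbS)$ normalises $S$, whence $\Aut(\Pi(\bbS))_O=S\rtimes\Aut(\bbS)$. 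Feeding this into $\Aut(\Pi(\bbS))=T\rtimes\Aut(\Pi(\bbS))_O$ and re-associating — legitimate because $T\trianglelefteq\Aut(\Pi(\bbS))$, $\Aut(\bbS)$ normalises both $T$ and $S$ and hence $\Sigma$, and $\Sigma\cap\Aut(\bbS)=\{1\}$ — yields $\Aut(\Pi(\bbS))=(T\rtimes S)\rtimes\Aut(\bbS)=\Sigma\rtimes\Aut(\bbS)$. I expect the main obstacle to be the two characterisation facts, namely that $l_\infty$ and $(\infty)$ are fixed by every collineation of a proper semifield plane and that the frame stabiliser matches the autotopism group faithfully; the surrounding semidirect-product bookkeeping is then routine.
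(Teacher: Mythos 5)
Your argument is sound and is, in substance, the standard proof; note that the paper itself does not prove this statement but imports it from Hughes and Piper (Lemma 8.4 and Theorem 8.6 of \emph{Projective Planes}), so there is no in-paper proof to compare against. The two facts you isolate as ``the main obstacle'' --- that in a proper semifield plane every collineation fixes the flag $((\infty),l_\infty)$, and that the stabiliser of the coordinate triangle $\{(0,0),(0),(\infty)\}$ is faithfully the autotopism group --- are precisely the content of the two cited results, so your proof is a correct reconstruction of theirs rather than a genuinely different route. The surrounding computations you supply (that $\sigma_c$ maps $l_{a,b}$ to $l_{a+c,b}$ by left distributivity, the regularity of $T$ on the affine points and of $S$ on $l_\infty\setminus\{(\infty)\}$, and the normalisation and trivial-intersection checks needed to re-associate $T\rtimes(S\rtimes\Aut(\bbS))$ as $(T\rtimes S)\rtimes\Aut(\bbS)$) are all correct.

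One caveat is worth making explicit, since you only hint at it with the word ``proper'': as literally stated the theorem fails for the Desarguesian plane, where $l_\infty$ and $(\infty)$ are not characteristic and $\Aut(\PG(2,q))$ is far larger than $q^3\cdot\abs{\Aut(\bbS)}$. Your proof correctly requires $\bbS$ to be a proper semifield, so that $l_\infty$ is the unique translation axis and $(\infty)$ the unique centre of a nontrivial elation group with affine axis; this hypothesis holds for the Knuth semifields to which the paper applies the result, but it should be stated. A purely cosmetic point: the points $(a)$, $a\in\F_q$, lie on $l_\infty$ and are not affine points --- what you mean is that $S$ acts regularly on $l_\infty\setminus\{(\infty)\}$.
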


The autotopism groups of Knuth's binary semifields are determined in \cite{knuth_class_1965}. For $\K_n$, the corresponding result is as follows: 
\begin{lemma}\label{lm:autotopism_knuth}
	The autotopism group $\Aut(\K_n)$ of $\K_n$ is cyclic of order $n$. Moreover, one of its generator $\sigma$ is defined by
	\begin{align*}
		(x,y) &\mapsto (x^2,y^2),\quad \text{for all }x,y\in \F_{2^n},\\
		(z) &\mapsto(z^2), \quad \text{for each }z\in \F_{2^n},\\
		(\infty) & \mapsto (\infty). 
	\end{align*} 
\end{lemma}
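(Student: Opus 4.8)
The plan is to split the statement into the easy lower bound (exhibiting a cyclic group of order $n$) and the harder upper bound (showing there is nothing else). Recall that an autotopism of the presemifield $\K_n$ is a triple $(L,M,N)$ of additive bijections of $\F_{2^n}$ with $N(x*y)=L(x)*M(y)$ for all $x,y$, and that such a triple induces the plane collineation $(x,y)\mapsto(L(x),N(y))$, $(a)\mapsto(M(a))$, $(\infty)\mapsto(\infty)$ of the type described in Theorem \ref{th:automorphism_semifieldplane}. I would first simplify the multiplication: since $\Tr(x),\Tr(y)\in\F_2$ and squaring is additive in characteristic $2$,
\[ x*y = xy+\big(y\Tr(x)+x\Tr(y)\big)^2 = xy+y^2\Tr(x)+x^2\Tr(y). \]
Taking $L=M=N=F$, where $F\colon z\mapsto z^2$ is the Frobenius map, and using $\Tr(x^2)=\Tr(x)$, a one-line computation gives $(x*y)^2=x^2*y^2$, so $F$ furnishes an autotopism whose induced collineation is exactly $\sigma$. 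Since $F$ has order $n$ on $\F_{2^n}$, this shows $\langle\sigma\rangle\cong\Z_n$ is a subgroup of $\Aut(\K_n)$.

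For the upper bound I would start from an arbitrary autotopism $(L,M,N)$ and write out $N(x*y)=L(x)*M(y)$ using the simplified form of $*$; both sides are biadditive in $(x,y)$, so this is an identity between bilinear maps $\F_{2^n}\times\F_{2^n}\to\F_{2^n}$. The strategy is to separate the ``field part'' $xy$ from the trace-correction $y^2\Tr(x)+x^2\Tr(y)$ and to exploit two structural features: that $x*x=x^2$ for every $x$, and that on the trace-zero hyperplane $H=\ker\Tr$ the product $*$ coincides with the field product. Restricting the identity to $H$ and comparing the field and correction contributions should force $L,M,N$ to be additive maps compatible with the field multiplication up to a scalar, namely of the shape $z\mapsto cz^{2^k}$.

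The final step is to eliminate the scalar $c$ and tie $L,M,N$ together. Substituting $L(x)=c_1x^{2^{k_1}}$, $M(y)=c_2y^{2^{k_2}}$, $N(z)=c_3z^{2^{k_3}}$ into the identity and matching the field parts and the two trace parts separately: the field part forces the exponents to agree and $c_3=c_1c_2$, while the nondegeneracy of the trace form $(x,y)\mapsto\Tr(xy)$ applied to the correction terms forces $c_1=c_2=c_3=1$. Hence every autotopism equals $(F^k,F^k,F^k)$, i.e. induces $\sigma^k$, giving $\Aut(\K_n)=\langle\sigma\rangle$ of order $n$. Equivalently, this last computation is exactly the assertion that all the nuclei of $\K_n$ equal $\F_2$, which is the form in which Knuth's determination in \cite{knuth_class_1965} may be invoked.

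I expect the main obstacle to be the middle step: ruling out exotic additive bijections. Over $\F_2$ there is an enormous supply of $\F_2$-linear self-maps of $\F_{2^n}$, and the content of the lemma is that the nonlinear trace terms rigidify the identity enough to leave only the $n$ field automorphisms. Making the passage from ``biadditive identity'' to ``scaled Frobenius map'' genuinely rigorous, rather than merely plausible, is the crux, and it is here that the specific shape of $\K_n$ -- in particular that the whole correction is governed by the single linear functional $\Tr$ -- must be used with care.
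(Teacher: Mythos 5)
The paper offers no proof of this lemma at all: it is quoted verbatim from Knuth's determination of the autotopism groups of his binary semifields in \cite{knuth_class_1965}, so there is no ``paper's argument'' to compare against. Your lower bound is correct and complete: the simplification $x*y=xy+y^2\Tr(x)+x^2\Tr(y)$ is right, the verification $(x*y)^2=x^2*y^2$ via $\Tr(z^2)=\Tr(z)$ is right, and it exhibits $\langle\sigma\rangle\cong\Z/n\Z$ inside $\Aut(\K_n)$. This is in fact the only part of the lemma the paper uses constructively.

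The upper bound, however, contains a genuine gap that you have flagged but not closed, and the two repairs you gesture at both have problems. First, restricting $N(x*y)=L(x)*M(y)$ to $x,y\in H=\ker\Tr$ does not reduce the right-hand side to $L(x)M(y)$: the correction terms $M(y)^2\Tr(L(x))+L(x)^2\Tr(M(y))$ survive unless you first prove $L(H)\subseteq H$ and $M(H)\subseteq H$, which is itself part of what must be shown. Second, even granting $N(xy)=L(x)M(y)$ on $H\times H$, the hyperplane $H$ is not closed under multiplication, so the classical fact that autotopisms of the field $\F_{2^n}$ onto itself are scaled Frobenius maps cannot be invoked off the shelf; extending the identity from $H\times H$ to all of $\F_{2^n}\times\F_{2^n}$ and then classifying is precisely the ``crux'' you name, and it is the entire content of the upper bound. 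Finally, the proposed fallback --- that the statement ``is exactly the assertion that all the nuclei of $\K_n$ equal $\F_2$'' --- is not a correct reduction: trivial nuclei only kill the $\F_{2^n}$-linear autotopisms arising from nuclear multiplications and say nothing about semilinear ones, so they bound a subgroup of $\Aut(\K_n)$ without determining the group. If you do not want to carry out Knuth's computation in full, the honest course is the paper's: cite \cite{knuth_class_1965} for the whole lemma, and keep your Frobenius calculation as a verification that the stated generator really is an autotopism.
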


By using Theorem \ref{th:automorphism_semifieldplane} and Lemma \ref{lm:autotopism_knuth}, we can prove the following result.
\begin{proposition}\label{prop:classification}
Let $\Pi(\bbS)$ be a semifield plane of even order $q$. Let $\Oval$ be a translation hyperovals of $\Pi(\bbS)$; then $\Oval$ can be written, up to the action of $\Aut(\Pi(\bbS))$, in one of the following forms:
\begin{enumerate}[label=(\alph*)]
	\item	$\Oval := \{(x, f_a(x)): x\in \F_q\} \cup \{(0), (\infty)\}$, where $f_a$ is an additive permutation on $\F_q$ and $f_a(0)=0$;
	\item  $\Oval := \{(f_b(y),y): y\in \F_q\} \cup \{(0), (\alpha)\}$, where $\alpha\in \F_q^*$, $f_b$ is a two-to-one additive map on $\F_q$ and $f_b(0)=0$.
\end{enumerate}
\end{proposition}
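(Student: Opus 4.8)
The plan is to combine the two structural inputs at our disposal: the translation-hyperoval hypothesis, which forces the affine points of $\Oval$ to be a coset of an $\F_2$-subspace, and the decomposition of $\Aut(\Pi(\bbS))$ in Theorem \ref{th:automorphism_semifieldplane}, which supplies exactly the collineations needed to move the two carrier points into a standard position. The dichotomy between (a) and (b) will turn out to be nothing more than whether the carrier set contains the point $(\infty)$ or not.

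First I would record the shape of the affine part. The translation group $\autfix{l_\infty}{l_\infty}$ is the elation group with axis $l_\infty$, which in a semifield plane is $\{(x,y)\mapsto(x+c,y+d): c,d\in\F_q\}\cong(\F_q^2,+)$; since $q=2^n$ this is elementary abelian, so each of its subgroups is an $\F_2$-subspace. By definition of a translation hyperoval there is such a subgroup $H$ of order $q$ acting regularly on the $q$ affine points of $\Oval$, whence those affine points form a single coset of the $n$-dimensional $\F_2$-subspace $V\le\F_q^2$ corresponding to $H$. Applying the translation that carries one affine point of $\Oval$ to the origin — a collineation fixing $l_\infty$ pointwise, hence fixing the carrier set — I may assume $(0,0)\in\Oval$ and that the affine part is exactly the subspace $V$.

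Next I would normalise the carrier set $\Oval\cap l_\infty$, which consists of two infinite points. The key tool is the shear group $\autfix{(\infty)}{l_0}$: for a semifield one checks directly, using the distributivity of $\star$, that for each $s\in\F_q$ the map $\tau_s\colon(x,y)\mapsto(x,y+x\star s)$ is a collineation fixing $(\infty)$, fixing $l_0$ pointwise (so fixing $(0,0)$), and sending $l_{a,b}$ to $l_{a+s,b}$; consequently $\tau_s$ acts on the remaining infinite points by $(a)\mapsto(a+s)$, so the shears act regularly on $\{(a):a\in\F_q\}$. Each $\tau_s$ is moreover $\F_2$-linear and fixes the origin, hence keeps the affine part a subspace. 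Now there are two cases. If $(\infty)$ is one of the carrier points, write the other as $(c)$ with $c\in\F_q$ and apply $\tau_c$ to reach carrier set $\{(0),(\infty)\}$. If $(\infty)$ is not a carrier point, the carrier set is $\{(c),(d)\}$ with $c\neq d$ in $\F_q$, and $\tau_c$ brings it to $\{(0),(\alpha)\}$ with $\alpha=c+d\in\F_q^*$; in both cases $(0,0)$ stays in $\Oval$.

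Finally I would read off the functional form by incidence counting, using that any line through a point of a hyperoval meets it in exactly two points. In the first case $(\infty)\in\Oval$ forces each vertical line $l_a$ to contain exactly one affine point, so the affine set is a graph $\{(x,f_a(x))\}$; the point $(0)\in\Oval$ forces each horizontal line $l_{0,b}$ (recall $x\star 0=0$) to contain exactly one affine point, so $f_a$ is bijective; the subspace property gives additivity and $(0,0)\in\Oval$ gives $f_a(0)=0$, yielding (a). In the second case $(0)\in\Oval$ makes the affine set a graph $\{(f_b(y),y)\}$ in $y$, while $(\infty)\notin\Oval$ makes every vertical line meet $\Oval$ in $0$ or $2$ affine points, so each fibre of $f_b$ has size $0$ or $2$; additivity together with $f_b(0)=0$ then forces $|\ker f_b|=2$, i.e. $f_b$ is two-to-one, yielding (b). The only delicate point is the explicit description of the shear group and its regular action on $\{(a):a\in\F_q\}$; once that is in place the remaining steps are routine counting, and notably no autotopisms are needed for this reduction.
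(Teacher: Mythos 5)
Your proposal is correct and follows essentially the same route as the paper: translate an affine point of $\Oval$ to $(0,0)$, use the shear group's regular action on $l_\infty\setminus\{(\infty)\}$ to put $(0)$ in the carrier set, and split according to whether $(\infty)$ is a carrier point. You simply spell out the details the paper leaves implicit (the affine part being a coset of the subspace corresponding to the regular translation subgroup, hence additivity, and the incidence counts giving bijectivity in case (a) and the two-to-one property in case (b)).
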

\begin{proof}
By definition, the autotopism group of a semifield $\bbS$ fixes $(\infty)$, $(0)$ and $(0,0)$ in $\Pi(\bbS)$. Together with Theorem \ref{th:automorphism_semifieldplane}, we see that the point $(\infty)$ is fixed under the automorphism group $\Aut(\Pi(\bbS))$ of $\Pi(\bbS)$. As a consequence we have that, up to the action of the automorphism group of the plane, translation hyperovals in $\Pi(\bbS)$ either contain the point $(\infty)$ or do not contain $(\infty)$. As the translation group is transitive on the affine points of $\Pi(\bbS)$, we can always assume that $(0,0)$ is in the translation hyperovals. Moreover, we can also assume that $(0)$ is in them, because of the transitivity of the shear group on the points in $l_\infty \setminus {(\infty)}$. This leads to shapes $(a)$ and $(b)$ where both $f_a$ and $f_b$ are additive over $\F_q$. In another word, they can be both expressed as linearized polynomials over $\F_q$. Also, according to the definition of hyperovals, $f_a$ must be a permutation on $\F_q$ and $f_b(y)=0$ has exactly two solutions in $\F_q$.
\end{proof}

It is clear that the standard hyperoval $\Oval_s$ is of type (a). In the Knuth's binary presemifield plane $\Pi(\K_5)$, we find $5$ inequivalent hyperovals of type (a) by using the computer algebra program MAGMA \cite{Magma}; see Table \ref{ta:hyperoval_a}. It is a natural question to ask whether any member of these sporadic examples, except for $f_a(x)=x^2$, can be generalized into an infinite family of hyperovals in $\Pi(\K_n)$. At least up to $n=11$, our MAGMA program shows that there is no other linearized polynomials with coefficients in $\{0,1\}$ defining new hyperovals in $\Pi(\K_n)$.

\begin{table}[h!]
\caption{Functions defining translation hyperovals of type (a) in $\Pi(\K_5) $}
\label{ta:hyperoval_a}
\centering{
	\begin{tabular}{|c|l|}    
	\hline No. & Functions\\
	\hline 1 &	$x^2$\\
	 2 &   $x^4 + x^2 + x$\\
	 3 &   $x^8$\\
	 4 &   $x^8 + x^4 + x^2$\\
	 5 &    $\omega^{7}x^{16} + \omega^{14}x^8 + \omega^{30}x^4 + \omega^{4}x^2 + \omega^{12}x$\\
	\hline
	\multicolumn{2}{|c|}{\small Here the primitive element $\omega$ satisfies $\omega^5 + \omega^2 + 1=0$.}	\Tstrut\\
	\hline 
	\end{tabular}}
\end{table}

Next, we turn to hyperovals of type (b) in $\Pi(\K_n)$. To get a new family of hyperovals, we need some results about Dickson's polynomials. The following lemma can be found in \cite[Proposition 6]{dillon_new_2004}; for more results concerning Dickson's polynomials, we refer to \cite{lidl_dickson_1993}. 
\begin{lemma}\label{lm:Dickson_polynomial}
	Let $n$ be an odd positive integer and $q:= 2^n$. Let $D_3$ be a map from $\F_q$ to itself given by the Dickson polynomial
	\[D_3(X)=X^3+X\in \F_q[X].\]
	For each $t\in\F_q$,
	\[ \#\{x\in \F_q : D_3(x)=t\} = 
	\left\{ \begin{array}{ll}
		1, & \hbox{if  $\Tr(t^{-1})=0$ and $t\neq 0$, $1$;}\\
		2, & \hbox{if $t=0$ or $t=1$;}\\
		3, & \hbox{if $\Tr(t^{-1})=1$ and $t\neq 0$, $1$.}
		  \end{array}
		\right.
	\]
\end{lemma}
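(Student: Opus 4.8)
The plan is to dispose of the degenerate values of $t$ by hand and to handle the generic values through the Dickson functional equation, which linearises the cubic. First I would treat $t=0$: in characteristic two one has the factorisation $D_3(X)=X^3+X=X(X+1)^2$, so $D_3(x)=0$ has the simple root $x=0$ and the double root $x=1$, i.e.\ exactly two solutions in $\F_q$. Since $D_3'(X)=3X^2+1=(X+1)^2$ vanishes only at $X=1$ and $D_3(1)=0$, the cubic $X^3+X+t$ is separable for every $t\neq 0$; hence for $t\neq 0$ the fibre of $D_3$ has size $0$, $1$, or $3$, and it remains only to decide which.

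For $t\ne 0$ the key identity is $D_3(y+y^{-1})=y^3+y^{-3}$, valid in characteristic two. Every $x\in\F_q$ can be written as $x=y+y^{-1}$ for a reciprocal pair $\{y,y^{-1}\}\subseteq \F_{q^2}^\ast$, and $y$ lies in $\F_q^\ast$ or in the norm-one subgroup $\mu_{q+1}\le \F_{q^2}^\ast$ according as $\Tr(x^{-1})=0$ or $\Tr(x^{-1})=1$ (this is the standard solvability criterion for $y^2+xy+1=0$, using $\Tr(z^2)=\Tr(z)$). The arithmetic input I would exploit is that $n$ is odd: then $\gcd(3,q-1)=1$, so cubing is a bijection of $\F_q^\ast$, whereas $3\mid q+1$, so cubing is three-to-one on $\mu_{q+1}$.

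Setting $u=y^3$, the equation $D_3(x)=t$ becomes $u+u^{-1}=t$, i.e.\ $u^2+tu+1=0$. Its two roots form a reciprocal pair; by the same criterion they lie in $\F_q^\ast$ precisely when $\Tr(t^{-1})=0$ and in $\mu_{q+1}\setminus\F_q$ precisely when $\Tr(t^{-1})=1$. In the first case the unique cube root in $\F_q^\ast$ of each root recovers a single reciprocal pair $\{y,y^{-1}\}$, hence exactly one $x$, which gives the count $1$ when $\Tr(t^{-1})=0$. In the second case I must instead pull the root $u\in\mu_{q+1}$ back through the three-to-one cubing map on $\mu_{q+1}$: if $u$ is a cube, its three cube roots together with those of $u^{-1}$ assemble into three distinct reciprocal pairs and yield three values of $x$, while otherwise they yield none.

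The main obstacle is exactly this last step, the regime $\Tr(t^{-1})=1$: to obtain the claimed count I must decide whether the relevant $u\in\mu_{q+1}$ is a cube, equivalently whether $u^{(q+1)/3}=1$. This is a cubic-residuacity question in the cyclic group $\mu_{q+1}$ of order $q+1$, which I would analyse with a character sum over $\mu_{q+1}$ or by tracking orders; particular care is needed for the values of $t$ at which $u$ has small order, notably $t=1$, where $u$ is a primitive cube root of unity and the outcome hinges on the congruence class of $q+1$ modulo $9$. Confirming the behaviour predicted by the statement across this range, and settling the boundary value $t=1$, is the delicate part of the argument; the degenerate case $t=0$ and the two-to-one parametrisation are routine bookkeeping.
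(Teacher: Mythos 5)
Your machinery is the right one --- the parametrisation $x=y+y^{-1}$, the functional equation $D_3(y+y^{-1})=y^3+y^{-3}$, and the fact that cubing is a bijection of $\F_q^\ast$ but three-to-one on $\mu_{q+1}$ are exactly the standard ingredients here, and the paper itself offers no proof to compare you with (it only cites Dillon--Dobbertin). Your treatment of $t=0$ and of the case $\Tr(t^{-1})=0$, $t\neq 0$ is complete and correct, and the latter is the only case the paper invokes later (to show that $y^3+y+1/c=0$ has a unique root when $\Tr(c)=0$, $c \ne 0$). The problem is the step you defer: the ``delicate'' verification in the regime $\Tr(t^{-1})=1$ cannot be carried out, because the statement as printed is false there. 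Your own separability remark already shows this: since $X^3+X+t$ is separable for every $t\neq 0$, the fibre over $t=1$ has $0$, $1$ or $3$ elements --- never the claimed $2$. Concretely, for $q=8$ the polynomial $X^3+X+1$ splits completely in $\F_8$ (it is the minimal polynomial of a generator), so the fibre over $t=1$ has three elements, while for $q=32$ its roots lie in $\F_8\setminus\F_2$ and the fibre is empty. Likewise, among the $q/2$ values of $t\neq 0$ with $\Tr(t^{-1})=1$, the fibre has three elements exactly when the associated $u\in\mu_{q+1}$ is a cube, which happens for precisely $(q-2)/6$ of them, and the fibre is empty for all the others. A global count exposes the defect at once: the displayed values sum over $t\in\F_q$ to $2q$, whereas $\sum_{t}\#D_3^{-1}(t)=q$.

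What the lemma should assert --- and what your set-up proves with no residuacity analysis at all --- is the count for the fibre \emph{through} $t$, namely $\#D_3^{-1}(D_3(t))$. Indeed, if $\Tr(t^{-1})=1$ and $t\neq 0,1$, write $t=y+y^{-1}$ with $y\in\mu_{q+1}$; then $D_3(t)=u+u^{-1}$ with $u=y^3$, which is a cube in $\mu_{q+1}$ by construction, so its fibre has exactly three elements. For $t\in\{0,1\}$ one gets $D_3(t)=0$, whose fibre is $\{0,1\}$; and for $\Tr(t^{-1})=0$, $t\neq 0$, the fibre through $t$ is $\{t\}$ by the argument you already gave. In that formulation the question of whether $9$ divides $q+1$ never arises. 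So rather than attempting to settle the cubic-residuacity question --- which would only confirm that the printed counts are wrong --- you should either prove the corrected statement about $\#D_3^{-1}(D_3(t))$ or restrict to the first case, which is all the paper actually needs.
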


\begin{theorem}\label{th:hyperoval_g1g2}
	Let $n$ be an odd positive integer and $q:=2^n$. Let $g(y)=y^2+y$ for $y\in\F_q$. In $\Pi(\K_n)$, the set
	\[\Oval_{g} := \{(g(y),y): y \in \F_{q}\} \cup \{(0), (1)\}\]
	is a translation hyperoval. Furthermore, the hyperoval $\Oval_{g}$ is inequivalent to the standard hyperoval $\Oval_s$.
\end{theorem}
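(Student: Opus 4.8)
The plan is to verify directly that no line of $\Pi(\K_n)$ meets $\Oval_g$ in more than two points, treating each pencil of lines in turn, and then to deduce both the translation property and the inequivalence with $\Oval_s$ from the coordinate description. First I would record that $g(y)=y^2+y$ is $\F_2$-additive, satisfies $g(0)=0$, and is two-to-one with $g^{-1}(0)=\{0,1\}$, so that $\Oval_g$ has the shape (b) of Proposition \ref{prop:classification} with $\alpha=1$. This already settles two pencils: the line $l_\infty$ meets $\Oval_g$ exactly in $(0),(1)$, and a vertical line $l_a$ meets it where $g(t)=a$, which has $0$ or $2$ solutions according as $\Tr(a)=1$ or $0$ (using the recurring identity $\Tr(g(t))=\Tr(t^2+t)=0$).

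For the lines $l_{a,b}$ I would substitute the parametrised affine point $(g(t),t)$ into the defining equation $t=g(t)\ast a+b$ and expand with the explicit multiplication of $\K_n$. The key simplification is that $\Tr(g(t))=0$ kills one of the two trace terms, giving $g(t)\ast a=g(t)\,a+g(t)^2\Tr(a)$. For $a\in\{0,1\}$ the carrier point $(a)$ lies on $\Oval_g$, and the equation collapses to $t=b$ when $a=0$ and to $t^4=b$ when $a=1$ (here $\Tr(1)=1$ since $n$ is odd), each giving a single affine point and hence exactly two points on the line. For $a\notin\{0,1\}$ with $\Tr(a)=0$ the equation is the separable quadratic $at^2+(a+1)t+b=0$ with $a\neq0$ and $a+1\neq0$, whose number of roots is $0$ or $2$.

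The only genuine obstacle is the case $a\notin\{0,1\}$ with $\Tr(a)=1$, where the equation becomes the quartic $t^4+ct^2+ct+b=0$ with $c=a+1$. My plan is to observe that $h(t):=t^4+ct^2+ct$ is $\F_2$-linear, so its fibres are cosets of $\ker h$ and every nonempty fibre has size $|\ker h|$; it therefore suffices to prove $|\ker h|=2$. Since $s^4+cs^2+cs=s(s^3+cs+c)$ and $s=0$ is not a root of the cubic (because $c\neq0$), this reduces to showing $s^3+cs+c$ has exactly one root in $\F_q$. I would normalise by $s=c^{1/2}v$, which turns the cubic into $D_3(v)=v^3+v=c^{-1/2}$, and then invoke Lemma \ref{lm:Dickson_polynomial} with $t=c^{-1/2}$: here $t\neq0,1$ (the value $1$ would force $c=1$, i.e.\ $a=0$, which is excluded) and $\Tr(t^{-1})=\Tr(c^{1/2})=\Tr(c)=\Tr(a)+\Tr(1)=0$, so the Dickson count gives exactly one preimage. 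Hence $|\ker h|=2$, the quartic has $0$ or $2$ roots, and the verification that $\Oval_g$ is a hyperoval is complete.

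Finally, for the translation property I would exhibit the subgroup $\{\tau_s:(x,y)\mapsto(x+g(s),y+s)\mid s\in\F_q\}$ of the translation group; additivity of $g$ shows that $\tau_s$ sends $(g(t),t)$ to $(g(t+s),t+s)$, so this group of order $q$ acts regularly on the affine points of $\Oval_g$, and $l_\infty$ is secant, as required by the definition. For the inequivalence with $\Oval_s$ I would use that, by Theorem \ref{th:automorphism_semifieldplane} together with Lemma \ref{lm:autotopism_knuth}, every automorphism of $\Pi(\K_n)$ fixes the point $(\infty)$ (as already noted in the proof of Proposition \ref{prop:classification}); since $(\infty)\in\Oval_s$ but $(\infty)\notin\Oval_g$, no collineation can carry one onto the other, which establishes the claimed inequivalence.
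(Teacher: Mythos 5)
Your proposal is correct and follows essentially the same route as the paper: expand $g(t)\ast a$ using $\Tr(g(t))=0$, split on $\Tr(a)$, reduce the $\Tr(a)=1$ case to a cubic that is normalised to $D_3(v)=\text{const}$ and counted via Lemma \ref{lm:Dickson_polynomial}, and derive inequivalence with $\Oval_s$ from the fact that every collineation fixes $(\infty)$. The only differences are cosmetic (your explicit exhibition of the translation subgroup and the slightly different but equivalent substitution $s=c^{1/2}v$), and all the verifications check out.
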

\begin{proof}
	By definition, we need to show that each line of $\Pi(\K_n)$  intersects $\Oval_g$ at exactly zero or two points. Clearly it holds for the line $l_\infty$. For arbitrary $a\in \F_{q}$, the cardinality of $l_a\cap \Oval_g$ is $\#\{y: g(y)=a\}$, which equals $0$ or $2$.
	
	For any $a, b\in \F_q$, there are $\#\{y: y=a*g(y)+b\}$ points in $l_{a,b}\cap \Oval_g$. Noting that $g(y)$ is additive. We only have to show that
	\begin{itemize}
		\item  for every $a\in \F_{q}\setminus \{0,1\}$, there are exactly two $y\in \F_q$ such that $y=a*g(y)$;
		\item $y=1*g(y)$ only if $y=0$.
	\end{itemize}
	
	Expanding $y+a*g(y)$, we have
	\begin{align*}
		y+a*g(y)  &= y+a(y^2+y) + \left((y^2+y)\Tr(a) + (a)\Tr(y^2+y)\right)^2\\
			&= y + a(y^2+y) + (y^4+y^2)\Tr(a)\\
			&= \Tr(a)y^4 + (\Tr(a)+a)y^2 + (a+1)y.
	\end{align*} 
	
	When $\Tr(a)=0$ and $a\neq 0$, $y+a*g(y)=0$ becomes
	\[a y^2 + (a+1)y=0,\]
	which has exactly two roots $0$ and $1+\frac{1}{a}$.
	
	When $\Tr(a)=1$, $y+a*g(y)=0$ becomes
	\begin{equation}\label{eq:oval_g_1_y^4}
		y^4 + (a+1)y^2 + (a+1)y=0.
	\end{equation}
	If $a=1$, it is clear that \eqref{eq:oval_g_1_y^4} holds if and only if $y=0$. Now suppose that $a\neq1$. Diving \eqref{eq:oval_g_1_y^4} by $y$, we get
	\begin{equation}\label{eq:oval_g_1_y^3}
		y^3 + (a+1)y + (a+1)=0.
	\end{equation}
	Let $c= (a+1)^{2^{n-1}}$. As $n$ is odd and $\Tr(a)=1$, we know that $\Tr(c)=0$. Replacing $y$ by $cy$ in \eqref{eq:oval_g_1_y^3} and diving it by $c^3$, we obtain
	\[y^3 + y + \frac{1}{c}=0.\]
	From Lemma \ref{lm:Dickson_polynomial}, we know that \eqref{eq:oval_g_1_y^3} has a unique root because $\Tr(c)=0$ and $c\neq 0$. Therefore \eqref{eq:oval_g_1_y^4} has exactly two roots.
	
	According to Proposition \ref{th:automorphism_semifieldplane} and Lemma \ref{lm:autotopism_knuth}, all collineation of $\Pi(\K_n)$ fix the point $(\infty)$. Hence, from Proposition \ref{prop:classification} $\Oval_{g}$ is inequivalent to $\Oval_s$. 
\end{proof}

Similarly we can show that, for $g'(y)=y^{2^{n-1}} + y^{2^{n-2}}$, the set
	\[\Oval_{g'} := \{(g'(y),y): y \in \F_{q}\} \cup \{(0), (1)\}\]
is also a hyperoval in $\Pi(\K_n)$. However, $\Oval_{g}$ and $\Oval_{g'}$ are equivalent: 
We take the non-identity shear $\theta$ which fixes $\{(0) , (1)\}$ and is defined by
\begin{align*}
	(x,y) &\mapsto (x, y+1*x), \quad\text{for all }x,y\in \F_q,\\
	(z) &\mapsto (z+1),\quad\text{for every }z\in \F_q,\\
	(\infty) &\mapsto (\infty).
\end{align*}
Thus $\theta(y^2+y,y)= (y^2+y, y+ 1*(y^2+y))= (y^2+y, y^4)$. Replace $y^4$ by $z$, we get $(z^{2^{n-1}} + z^{2^{n-2}}, z)$,  which implies that $\theta(\Oval_{g})=\Oval_{g'}$.

Again by using MAGMA program, we find that there are totally $12$ inequivalent translation hyperovals of type (b) in $\Pi(\K_5)$. The corresponding functions $f_b$ and the points $(\alpha)$ are listed in Table \ref{ta:hyperoval_b}.

\begin{table}[h!]
\caption{Functions defining translation hyperovals of type (b) in $\Pi(\K_5)$}
\label{ta:hyperoval_b}
\centering{
	\begin{tabular}{|c|c|l|}
	\hline No.& $\alpha$ &  {Functions}  \\
	\hline
	$1$ & $1$ & $y^2 + y$, \Tstrut\\
	$2$	&  & $y^4 + y$  \Tstrut\\ \hline
	$3$ & $\omega$ & $\omega^{21} y^{16} + \omega^3 y^8 + \omega^{11} y^4 + \omega^{23} y^2 + \omega^{25} y$  
	\Tstrut \\\hline 
	$4$ & $\omega^3$ & $\omega^{13} y^{16} + \omega^{17} y^8 + \omega^4 y^4 + \omega^{16} y^2 + \omega^{14} y$, \Tstrut \\
	$5$ &	       & $\omega^{13} y^{16} + \omega^{25} y^8 + \omega^{17} y^4 + \omega^{14} y^2 + \omega^{12} y$, \Tstrut \\
	$6$ &		   & $\omega^{16} y^{16} + \omega^{16} y^8 + \omega^{17} y^4 + \omega^8 y^2 + \omega^6 y$     \\\hline
	$7$ & $\omega^7$ & $\omega^{26} y^{16} + \omega^{27} y^8 + \omega^{18} y^4 + \omega^{27} y^2 + \omega^{27} y$,  \Tstrut \\ 
	$8$ &		   & $\omega^{17} y^{16} + \omega^{18} y^8 + \omega^8 y^4 + \omega^{29} y^2 + \omega y$, \Tstrut \\ \hline
	$9$ & $\omega^{11}$ & $\omega^{20} y^{16} + \omega^{25} y^8 + \omega^{29} y^4 + \omega^{26} y^2 + \omega^5 y$,  \Tstrut\\
	$10$ &		    & $\omega^{19} y^{16} + \omega^2 y^8 + \omega^{28} y^4 + \omega^{27} y^2 + \omega^7 y$, \Tstrut\\
	$11$ &		   & $\omega^{11} y^{16} + \omega^{24} y^8 + \omega^{17} y^4 + \omega^{10} y^2 + \omega^4 y$ \Tstrut\\ \hline

	$12$ & $\omega^{15}$ & $\omega^{29} y^{16} + \omega^{21} y^8 + \omega^5 y^4 + \omega^{24} y^2 + \omega^{16} y$     \Tstrut\\ \hline
		\multicolumn{3}{|c|}{\small Here the primitive element $\omega$ satisfies $\omega^5 + \omega^2 + 1=0$.}	\Tstrut\\\hline 
	\end{tabular}
}
\end{table}

Combining the results in Table \ref{ta:hyperoval_a} and Table \ref{ta:hyperoval_b}, we have a complete classification of translation hyperovals in $\Pi(\K_5)$.

\begin{remark}
Let $(\bbS, +, \star )$ be a (pre)semifield of order $q$. Assume that $\Oval$ is an (hyper)oval in $\Pi(\bbS)$. Immediately we can get a line (hyper)oval in the dual plane. In particular, when $\star$ is commutative, we get a line (hyper)oval in $\Pi(\bbS)$ itself. For instance, the line hyperoval derived from the standard hyperoval $\Oval_s$ is
\[\{y=x\star a + a \star a: a\in \F_q \} \cup \{l_0, l_\infty\}. \]
Similarly, we can also get a family of line hyperovals in $\Pi(\K_n)$ from the hyperovals in Theorem \ref{th:hyperoval_g1g2} and more examples in $\Pi(\K_5)$ from the hyperovals listed in Table \ref{ta:hyperoval_a} and Table \ref{ta:hyperoval_b}.
\end{remark}

\section{Line hyperovals in Symplectic planes}\label{se:symplectic}
Let $(\bbP,+,\star)$ be a (pre)semifield of order $q$. By definition, for any $a\in \bbP$, the map $R_a$ given by $x\mapsto x \star a$ is additive on $\bbP$. Hence $R_a$ must be linear over a certain finite field $\F_r$ which is a subfield of  $\F_q$, and we can represent $R_a$ by a square matrix over $\F_r$ which is nonsingular if and only if $a\neq 0$.  Moreover, the set $\{R_a : a\in \bbP\}$ is also closed under addition. It is not difficult to show that the set $\{R^t_a : a\in \bbP\}$ where here $R^t_a$ stays for the transpose matrix of $R_a$, defines another (pre)semifield, which we call the \emph{transpose} of $\bbP$ and denote by $\bbP^t$. 

Another obvious (pre)semifield derived from $\bbP$ is the opposite algebra of $\bbP$, which coordinatises the dual plane of $\Pi(\bbP)$. We call it the \emph{dual} of $\bbP$ and denote it by $\bbP^d$. When $\bbP$ is commutative, it is clear that $\bbP=\bbP^d$.

In \cite{knuth_class_1965}, Knuth investigated the associated cubical array of a (pre)semifield $\bbP$ and obtained a chain of six ones, which are
\[\bbP, \bbP^d, \bbP^t, \bbP^{dt}, \bbP^{td}, \bbP^{dtd}. \]
We also call them the \emph{Knuth derivatives} of $\bbP$. 
In particular, when $\bbP$ is commutative, there are only three of them $\{\bbP, \bbP^t, \bbP^{td}\}$, and $\bbP^{td}$ is a symplectic semifield. 

A (pre)semifield $(\bbS, +, \star)$ is called \emph{symplectic} if \[ \Tr_{q/p}(x(y\star z)) = \Tr_{q/p}(y(x \star z)), \] for all $x,y,z\in \bbS$, where $q=\# \bbS$ and $q$ is a power of a prime $p$; see \cite{kantor_commutative_2003} for more details.

For the Knuth's binary presemifield $\K_n$, up to isotopism, the multiplication in the associated symplectic presemifield $\K_n^{td}$ is defined by
\[x\circ y:= xy + \Tr(x) \sqrt{y} + \Tr(x^2y),\]
for $y,z\in \F_q$, where $q=2^n$. It is worth noting that the autotopism group of $\K_n^{td}$ is the same as that of $\K_n$.

Next, let us explain a bit how to derive $\K_n^{td}$ from $\K_n$ in a different way. This approach is first applied by Kantor in \cite{kantor_commutative_2003}, and we use it later to derive line hyperovals in $\Pi(\K_n^{td})$ from the hyperovals of type (a) in $\Pi(\K_n)$.

For $a,b,c,d\in \F_q$, we define a nondegenerate alternating bilinear form by
\[\langle(a,b), (c,d)\rangle:= \Tr(ad+bc).\]  
It is straightforward to prove that 
\[\langle (x, x\circ z), (y, y\circ z) \rangle= \Tr(y(x \circ z)+ x (y\circ z)) = 0,\]
for all $x,y,z\in \F_q$, which is a verification of the symplecticity of $\K_n^{td}$.

Let $B$ be a bivariate polynomial given by 
\[B(X,Y)=\sum_{0\le i\le j\le n-1}a_{ij}X^{2^i}Y^{2^j}\in \F_q[X,Y].\] 
Clearly, for a given element $m\in\F_q$, the polynomial $B(X,m)$ and $B(m,X)$ are both linearized polynomials. Furthermore, we can always find another unique bivariate polynomial $\bar{B}(X,Y)$ such that
\begin{itemize}
	\item $\langle (x, B(x,m)), (y, \bar{B}(m,y)) \rangle =0$, for all $x,y,m\in \F_q$;
	\item for each $m\in \F_q$, the kernels of the two maps given by $x \mapsto B(x,m)$ and $y\mapsto \bar{B}(m,y)$ are of the same size.
\end{itemize}
This statement can be shown in the following way: If we choose a trace-orthogonal basis $\mathscr{B}$ of $\F_q$ over $\F_2$ and consider $a,b,c,d\in\F_q$ as elements in $\F_{2}^n$, then we can write $\langle(a,b), (c,d)\rangle= a \cdot d + b\cdot c$ in terms of the usual dot product of vectors. Let $M$ and $N$ be two $n\times n$ matrices over $\F_2$ corresponding to $B(x,m)$ and $\bar{B}(m,y)$ respectively. The $n$-dimensional subspaces $\{(x,xM) : x\in \F_{2}^n\}$ and $\{(x,xN) : x\in \F_{2}^n\}$ are orthogonal if and only if $M=N^T$, since $\langle (x,xM), (y, yN)\rangle = xN^Ty^t + xMy^t=x(N^T+M)y^t$. On the other hand, the additivity of $\bar{B}(\cdot, y)$ for given $y$ is directly derived from the additivity of $B(x, \cdot)$.

As an application, we can get the symplectic presemifield $\K_n^{td}$ from $\K_n$ by showing that 
\begin{equation}\label{eq:commutative_symplectic_0}
	\langle (x, x*z), (y, z\circ y) \rangle  = \Tr(x (z\circ y) + y (x*z) )=0,
\end{equation}
for all $x,y,z\in \F_q$. 


Let $L(X)=\sum_{i=0}^{n-1}a_i X^{2^i}$ be a linearized polynomial over $\F_{q}$. The polynomial
\[\bar{L}(X) =  \sum_{i=0}^{n-1}a_{i}^{2^{n-i}} X^{2^{n-i}}\]
is the \emph{adjoint} of $L$ with respect to the symmetric bilinear form $(x,y) \mapsto \Tr(xy)$. Hence, we have $\langle (x, L(x)), (y, \bar{L}(y)) \rangle  = 0$ for all $x,y\in \F_q$. Together with \eqref{eq:commutative_symplectic_0}, we obtain
\begin{equation}\label{eq:showing_line_hyperoval_type_a}
	\langle (x, x*z + L(x)), (y, z\circ y + \bar{L}(y)) \rangle  = 0,
\end{equation}
for all $x,y,z\in \F_q$. It implies that the kernel of $z\circ y + \bar{L}(y)$ is of the same size as that of $x*z + L(x)$. In particular, when $L(x)$ defines a hyperoval of type (a) in $\Pi(\K_n)$, we get an associated line hyperoval in $\Pi^{td}(\K_n)$; and vice versa. Hence, we obtain the following result.
\begin{theorem}\label{th:classical_line_hyperovals}
	Let $L$ be a linearized polynomial on $\F_q$. Then $L$ defines a hyperoval of type (a) in $\Pi(\K_n)$ if and only if the set of lines defined by
	\[\cL_c :=\{l_{m,\bar{L}(m)} : m\in \F_q \}\cup 
			\{l_0,{l}_\infty\},\] where $l_{m,\bar{L}(m)}=\{(x,y): y=x\circ m + \bar{L}(m)\} \cup \{(m)\}$
	is a line hyperoval in $\Pi^{td}(\K_n)$.
\end{theorem}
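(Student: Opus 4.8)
The plan is to reduce each side of the claimed equivalence to one and the same statement about the sizes of kernels of certain $\F_2$-linear maps, and then to close the loop with the adjointness relation \eqref{eq:showing_line_hyperoval_type_a} already proved above. For each $z\in\F_q$ introduce the two additive maps on $\F_q$,
\(\lambda_z\colon x\mapsto x*z+L(x)\) (living on the $\K_n$ side) and \(\mu_z\colon y\mapsto z\circ y+\bar L(y)\) (living on the $\K_n^{td}$ side). What \eqref{eq:showing_line_hyperoval_type_a} gives us, via the fact that a matrix and its transpose have equal nullity, is precisely that $\ker\lambda_z$ and $\ker\mu_z$ have the same size for every $z$. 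The theorem will follow once I show that the hyperoval condition is governed exactly by the $\ker\lambda_z$ and the line hyperoval condition exactly by the $\ker\mu_z$.

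First I would translate the left-hand side, using that $\Oval=\{(x,L(x)):x\in\F_q\}\cup\{(0),(\infty)\}$ is a hyperoval iff every line of $\Pi(\K_n)$ meets it in $0$ or $2$ points. Running through the line types: $l_\infty$ meets $\Oval$ in $\{(0),(\infty)\}$, and each line $l_a$ through $(\infty)$ with $a\in\F_q$ meets it in $(\infty)$ together with the unique affine point $(a,L(a))$, so these give exactly two points automatically and impose no constraint. The lines $l_{0,b}$ pass through $(0)$ and otherwise meet $\Oval$ in $L^{-1}(b)$, so they give two points for all $b$ iff $L$ is a permutation. Finally a line $l_{a,b}$ with $a\neq0$ avoids $(0)$ and $(\infty)$ and meets $\Oval$ in the affine solutions of $L(x)=x*a+b$, i.e.\ of $\lambda_a(x)=b$; by additivity of $\lambda_a$ this count is $0$ or $|\ker\lambda_a|$, and being a hyperoval forces $|\ker\lambda_a|=2$ for every $a\neq0$. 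Thus $L$ defines a type (a) hyperoval iff $L$ is a permutation and $|\ker\lambda_a|=2$ for all $a\in\F_q^{*}$.

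Next I would translate the right-hand side dually, using that $\cL_c$ is a line hyperoval iff every point of $\Pi^{td}(\K_n)$ lies on $0$ or $2$ of its lines. For the points at infinity this is automatic: $(\infty)$ lies only on $l_0$ and $l_\infty$, while each $(c)$ with $c\in\F_q$ lies only on $l_\infty$ and on the single family line $l_{c,\bar L(c)}$, so no constraint arises. For an affine point $(0,v)$, since $0\circ m=0$ it lies on $l_0$ together with the lines indexed by the $m$ with $\bar L(m)=v$, giving two incidences for all $v$ iff $\bar L$ is a permutation. For an affine point $(u,v)$ with $u\neq0$, it lies on exactly those $l_{m,\bar L(m)}$ with $u\circ m+\bar L(m)=v$, i.e.\ with $\mu_u(m)=v$; by additivity of $\mu_u$ this is $0$ or $|\ker\mu_u|$, so $\cL_c$ is a line hyperoval iff $\bar L$ is a permutation and $|\ker\mu_u|=2$ for all $u\in\F_q^{*}$.

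Finally I would assemble the two reductions. The two permutation conditions agree because $\bar L$ is the adjoint of $L$, so one is invertible iff the other is (equivalently, take $z=0$ in \eqref{eq:showing_line_hyperoval_type_a}, which yields $|\ker L|=|\ker\bar L|$); and the kernel-size conditions agree term by term because \eqref{eq:showing_line_hyperoval_type_a} gives $|\ker\lambda_z|=|\ker\mu_z|$ for every $z$, in particular for each $z=a=u\in\F_q^{*}$. Hence the two characterizations coincide and the equivalence holds in both directions. The only genuinely delicate part is the bookkeeping in the two geometric translations: one must check that the points and lines at infinity always contribute exactly two incidences (so that they impose no condition), and must use the additivity of $\lambda_a$ and $\mu_u$ correctly to convert ``the number of solutions is $0$ or $2$'' into ``the kernel has size $2$''. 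Once this is in place, the adjointness relation \eqref{eq:showing_line_hyperoval_type_a} does all the remaining work.
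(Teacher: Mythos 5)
Your proof is correct and follows essentially the same route as the paper: both rest on the orthogonality relation \eqref{eq:showing_line_hyperoval_type_a} and the transpose/equal-nullity argument to conclude $\#\ker(x\mapsto x*z+L(x))=\#\ker(y\mapsto z\circ y+\bar L(y))$ for every $z$. You simply write out in full the geometric bookkeeping (reducing both the hyperoval and the line-hyperoval conditions to the permutation condition at $z=0$ and the kernel-size-two condition for $z\neq 0$) that the paper leaves implicit in the sentence preceding the theorem.
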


By applying Theorem \ref{th:classical_line_hyperovals} onto the standard translation hyperoval and onto those contained in Table \ref{ta:hyperoval_a}, we get line hyperovals. 
\begin{corollary}\label{coro:classical_symplectic}
	Let $q=2^n$. In the symplectic presemifield planes $\Pi(\K_n^{td})$.
	\begin{itemize}
		\item	The set of lines defined by
			\[\cL_c :=\{l_{m^2,m} : m\in \F_q \}\cup 
				\{l_0, {l}_\infty\}\]
			is a line hyperoval.
		\item In particular, when $n=5$, there are exactly $5$ line hyperovals,  each of which contains the lines $l_0$ and $l_\infty$.
	\end{itemize}
\end{corollary}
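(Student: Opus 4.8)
The plan is to read both assertions directly off Theorem~\ref{th:classical_line_hyperovals}, which gives a bijection $L \leftrightarrow \cL_c$ between linearized polynomials $L$ defining a type (a) hyperoval in $\Pi(\K_n)$ and line hyperovals $\{l_{m,\bar L(m)} : m\in\F_q\}\cup\{l_0,l_\infty\}$ of $\Pi(\K_n^{td})$. For the first item I would start from the standard hyperoval $\Oval_s$: it is of type (a) with defining function $f_a(x)=x*x$, and expanding the $\K_n$-product gives $x*x=x^2$, so the relevant polynomial is $L(X)=X^2$. Its adjoint with respect to $\Tr(xy)$ is $\bar L(X)=X^{2^{n-1}}$, whence Theorem~\ref{th:classical_line_hyperovals} produces the line hyperoval $\{l_{m,\,m^{2^{n-1}}}\}\cup\{l_0,l_\infty\}$. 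Reparametrising through the bijection $m\mapsto m^2$ turns $l_{m,\,m^{2^{n-1}}}$ into $l_{m^2,m}$, since $(m^2)^{2^{n-1}}=m^{2^n}=m$, giving exactly the set $\cL_c$ in the statement.

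For the second item I would feed each of the five functions of Table~\ref{ta:hyperoval_a} into the same recipe, obtaining five line hyperovals through $l_0$ and $l_\infty$. To see that they are pairwise inequivalent and that the count is exact, I would track the common autotopism group $\langle\sigma\rangle$ of $\K_n$ and $\K_n^{td}$ from Lemma~\ref{lm:autotopism_knuth}. A short computation, using $(x\circ m)^2=x^2\circ m^2$, shows that $\sigma$ sends $l_{m,b}$ to $l_{m^2,b^2}$, hence $\cL_c(L)$ to $\cL_c(L^\sigma)$, where $L^\sigma$ squares the coefficients of $L$; since taking adjoints commutes with this coefficient-squaring, the map $L\mapsto\cL_c(L)$ is $\langle\sigma\rangle$-equivariant. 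By Theorem~\ref{th:automorphism_semifieldplane} the stabiliser of the configuration that pins down the type (a) form reduces to $\langle\sigma\rangle$ on both sides, so equivalence classes of type (a) hyperovals correspond to equivalence classes of the associated line hyperovals, and the five entries of Table~\ref{ta:hyperoval_a} yield five inequivalent line hyperovals.

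The hard part will be the completeness half of the second item: showing that \emph{every} line hyperoval containing $l_0$ and $l_\infty$ already has the form $\cL_c$. Writing such a hyperoval as $\{l_0,l_\infty\}\cup\{l_{m,\phi(m)}:m\in\F_q\}$, the pencil conditions at the infinite points $(m)$ and at the points $(0,y)$ force $\phi$ to be a bijection, but only additivity of $\phi$ lets one set $\phi=\bar L$ and invoke Theorem~\ref{th:classical_line_hyperovals}. For $n=5$ this additivity, and hence the exact count, is supplied by the MAGMA classification behind Table~\ref{ta:hyperoval_a}; a structural argument that $\phi$ must be additive is exactly what one would need in order to push the count to general odd $n$.
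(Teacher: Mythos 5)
Your proposal is correct and takes essentially the same route as the paper: the paper derives the corollary in one line by feeding the standard hyperoval (with $x*x=x^2$, hence $\bar L(X)=X^{2^{n-1}}$, reparametrised via $m\mapsto m^2$) and the five functions of Table~\ref{ta:hyperoval_a} into Theorem~\ref{th:classical_line_hyperovals}, exactly as you do; your $\langle\sigma\rangle$-equivariance argument for why the five classes stay distinct is a detail the paper leaves implicit but is correct, since taking adjoints commutes with squaring coefficients and the stabiliser of the normalised type (a) form reduces to $\Aut(\K_n)$. On your final worry: the count of ``exactly $5$'' is meant within the class of line hyperovals dual to \emph{translation} hyperovals of $\Pi(\K_5^t)$ (the paper's type (a) line hyperovals, for which $L_a$ is additive by definition of the regular group action), so no structural additivity argument for an arbitrary $\phi$ is needed or claimed.
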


As shown by Maschietti in \cite{maschietti_symplectic_2003,maschietti_symmetric_2009}, for any symplectic quasifield (not necessarily semifield), there is always a line hyperoval containing the lines ${l}_\infty$ and $x=0$. In fact, the line hyperoval $\cL_c$ obtained in Corollary \ref{coro:classical_symplectic} can be also proved by using  Maschietti's approach.

Equivalently, we may also consider the line hyperovals in $\Pi(\K_n^{td})$ as the hyperovals in the dual plane $\Pi(\K_n^{t})$ of $\Pi(\K_n^{td})$. Hence, the line hyperovals presented in Corollary \ref{coro:classical_symplectic} correspond to translation hyperovals containing the points $(0)$ and $(\infty)$ in $\Pi(\K_n^t)$.

In $\Pi(\K_n)$, for every translation hyperoval $\Oval$, there is a subgroup of order $q$ of the translation group acting regularly on the affine points of $\Oval$. Dually, for every line hyperovals  $\cL$ from Theorem \ref{th:classical_line_hyperovals}, there is a subgroup of order $q$ of the collineation group $\Gamma_{(\infty), (\infty)}$ on $\Pi(\K_n^{td})$ acting regularly on the $q$ lines in $\cL \setminus \{l_0, {l}_\infty\}$. Here $\Gamma_{(\infty), (\infty)}$ is the group of all collineations fixing $(\infty)$ and all lines through $(\infty)$. It corresponds to the translation group on the dual plane $\Pi(\K_n^{t})$ of $\Pi(\K_n^{td})$. 

As in Section \ref{se:commutative}, up to the action of this group, we can classify the line hyperovals into two types: A line hyperoval of type (a) contains the lines $l_\infty$ and $l_0$; one of type (b) contains the lines $l_0$ and $l_\alpha$ for certain $\alpha\in \F_q^*$. Precisely, 

\begin{enumerate}
	\item[(a)]	$\cL := \{l_{x, L_a(x)}: x\in \F_q\} \cup \{l_0, l_{\infty}\}$, where $L_a$ is an additive permutation on $\F_q$ and $L_a(0)=0$;
	\item[(b)]  $\cL := \{l_{L_b(y),y}: y\in \F_q\} \cup \{l_0, l_{\alpha}\}$, where $\alpha\in \F_q^*$, $L_b$ is a two-to-one additive map on $\F_q$ and $L_b(0)=0$.
\end{enumerate}

Similar to Theorem \ref{th:classical_line_hyperovals}, we can also prove that for each hyperoval of type (b) in  $\Pi(\K_n)$, there is an associated line hyperoval of type (b) in  $\Pi(\K^{td}_n)$, or equivalently a translation hyperoval of type (b) in $\Pi(\K_n^t)$.

Assume that linearized polynomial $L$ and $\alpha\in\F_q$ together define a hyperoval $\Oval_L$ of type (b) in $\Pi(\K_n)$. More precisely, 
\[\Oval_L = \{(L(y),y): y \in \F_{q}\} \cup \{(0), (\alpha)\}.\]
It implies
\begin{equation}\label{eq:number_of_solution_Lya}
	\#\{y : L(y)*a+y=0\}= \left\{
	  \begin{array}{ll}
	    1, & \hbox{if $a=\alpha$;} \\
	    2, & \hbox{if $a\neq 0, \alpha$.}
	  \end{array}
	\right.
\end{equation}
Over the chosen trace-orthogonal basis $\mathscr{B}$ (see the discussions above \eqref{eq:commutative_symplectic_0}), we view $y$ as a vector in $\F_2^n$. As both maps $x\mapsto L(x)$ and $x\mapsto x*a$ are linear over $\F_2$, they defines two $n\times n$ matrices $N_L$ and $M_a$ over $\F_2$, respectively. It means that $L(y)*a+y$ can be written as
\[y(N_LM_a + I).\]
By \eqref{eq:number_of_solution_Lya}, we have
\begin{equation}
	\mathrm{rank}(N_LM_a + I)= \left\{
	 \begin{array}{ll}
	  0, & \hbox{if $a=\alpha$;} \\
	  1, & \hbox{if $a\neq 0, \alpha$.}
	 \end{array}
	 \right.
\end{equation}
On the other hand, as $M_a$ is always invertible for nonzero $a$, we have $N_LM_a + I= (N_L+ M_a^{-1}) M_a$. Thus
\[\mathrm{rank}(N_L+ M_a^{-1})=\mathrm{rank}(N_LM_a + I).\]
Together with 
$(N_L + M_a^{-1})^TM_a^T=N_L^TM_a^T+ I$, we have
\begin{equation}\label{eq:rank=rank}
	\mathrm{rank}(N_L^TM_a^T+ I)=\mathrm{rank}(N_LM_a + I).
\end{equation}
Translate $y(N_L^TM_a^T+ I)$ back into the notations of semifields, it becomes $a\circ L(y)+y$. By \eqref{eq:number_of_solution_Lya} and \eqref{eq:rank=rank}, 
\[	\#\{y : a\circ \bar{L}(y)+y=0\}= \left\{
	  \begin{array}{ll}
	    1, & \hbox{if $a=\alpha$;} \\
	    2, & \hbox{if $a\neq 0, \alpha$.}
	  \end{array}
	\right.
\]
Therefore, we have proved the following result.
\begin{theorem}\label{th:classical_line_hyperovals_b}
	Let $L$ be a linearized polynomial on $\F_q$ and $\alpha\in \F_q^*$. Then $L$ and $\alpha$ defines a hyperoval of type (b) in $\Pi(\K_n)$ if and only if the set of lines defined by
	\[\cL_c :=\{l_{\bar{L}(m),m} : m\in \F_q \}\cup 
			\{l_0,{l}_\alpha\},\] where $l_{\bar{L}(m),m}=\{(x,y): y=x\circ \bar{L}(m) + m\} \cup \{(m)\}$
	is a line hyperoval in $\Pi^{td}(\K_n)$.
\end{theorem}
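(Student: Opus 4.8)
The plan is to reduce both directions of the equivalence to a single elementary counting statement and then to connect the two counts by a rank–transpose argument, mirroring the type~(a) case of Theorem~\ref{th:classical_line_hyperovals} but accounting for the shifted carrier point $(\alpha)$ and for the fact that $L_b$ is two-to-one rather than a permutation. First I would record, as in \eqref{eq:number_of_solution_Lya}, that $\Oval_L=\{(L(y),y):y\in\F_q\}\cup\{(0),(\alpha)\}$ is a hyperoval of type~(b) in $\Pi(\K_n)$ precisely when $\#\{y:L(y)*a+y=0\}$ equals $1$ for $a=\alpha$ and $2$ for $a\neq 0,\alpha$. This follows from the observation that $y\mapsto y+L(y)*a$ is $\F_2$-linear (since $L$ is additive and $*$ is bilinear over $\F_2$), so each affine line $l_{a,b}$ meets the affine part of $\Oval_L$ in $0$ or in $2$ raised to the nullity of that map, and the two carrier points $(0),(\alpha)$ force exactly the stated kernel sizes. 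Dually, $\cL_c$ is a line hyperoval in $\Pi(\K_n^{td})$ exactly when no point is incident with three of its lines, which is the same count $\#\{y:a\circ\bar L(y)+y=0\}\in\{1,2\}$ with the same dependence on $a$.

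The bridge between these two counts is linear-algebraic. Fixing a trace-orthogonal $\F_2$-basis as in the discussion preceding \eqref{eq:commutative_symplectic_0}, I would represent $x\mapsto L(x)$ and $x\mapsto x*a$ by matrices $N_L$ and $M_a$ over $\F_2$, so that $L(y)*a+y=y(N_LM_a+I)$ and the number of solutions is $2$ raised to the nullity of $N_LM_a+I$. Since $M_a$ is invertible for $a\neq 0$, I factor $N_LM_a+I=(N_L+M_a^{-1})M_a$, so the nullity of $N_LM_a+I$ equals that of $N_L+M_a^{-1}$; transposing via $(N_L+M_a^{-1})^TM_a^T=N_L^TM_a^T+I$ and using invariance of rank, hence of nullity, under transposition yields \eqref{eq:rank=rank}, namely that $N_L^TM_a^T+I$ has the same nullity as $N_LM_a+I$.

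It then remains to read $y(N_L^TM_a^T+I)=0$ back in semifield coordinates. Here the correspondences already established in this section do the work: the adjoint relation $\langle(x,L(x)),(y,\bar L(y))\rangle=0$ identifies $N_L^T$ with the map $y\mapsto\bar L(y)$, while the symplectic identity \eqref{eq:commutative_symplectic_0} identifies $M_a^T$ with left multiplication $y\mapsto a\circ y$ in $\K_n^{td}$. Thus $y(N_L^TM_a^T+I)=a\circ\bar L(y)+y$, and the nullity equality transports the counting condition verbatim onto $\cL_c$. Because every implication in the chain (the factorisation, the transpose step, and the two identifications) is reversible, the counting conditions for $\Oval_L$ and for $\cL_c$ are equivalent, which is the claimed ``if and only if''.

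The step I expect to be the main obstacle is not the rank computation itself but the bookkeeping that makes \eqref{eq:rank=rank} say something genuine about $\K_n^{td}$: one must verify that, relative to the chosen trace-orthogonal basis, $M_a^T$ is \emph{exactly} multiplication by $a$ in $\K_n^{td}$ (via \eqref{eq:commutative_symplectic_0} and the derivation leading to \eqref{eq:showing_line_hyperoval_type_a}) and that $N_L^T$ realises the adjoint $\bar L$, all while the parameter $a$ ranges over the correct set and the carriers $l_0,l_\alpha$ correspond to $(0),(\alpha)$. Keeping these identifications consistent on both sides of the equivalence, rather than any single calculation, is where the care lies.
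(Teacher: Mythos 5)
Your proposal is correct and follows essentially the same route as the paper: reduce both sides to the kernel-size condition \eqref{eq:number_of_solution_Lya}, represent $L$ and $x\mapsto x*a$ by matrices $N_L$, $M_a$ over a trace-orthogonal basis, factor $N_LM_a+I=(N_L+M_a^{-1})M_a$, transpose to get \eqref{eq:rank=rank}, and reinterpret $N_L^TM_a^T+I$ as $y\mapsto a\circ\bar L(y)+y$. The only difference is cosmetic (you phrase the invariant as nullity where the paper writes rank), so no further comparison is needed.
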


By applying Theorem \ref{th:classical_line_hyperovals_b} onto the translation hyperoval in Theorem \ref{th:hyperoval_g1g2} and onto those contained in Table \ref{ta:hyperoval_b}, we get line hyperovals. 
\begin{corollary}\label{coro:classical_symplectic_b}
	Let $q=2^n$. In the symplectic presemifield planes $\Pi(\K_n^{td})$.
	\begin{itemize}
		\item	The set of lines defined by
			\[\cL_c :=\{l_{m+\sqrt{m},m} : m\in \F_q \}\cup 
				\{l_0, {l}_1\}\]
			is a line hyperoval.
		\item In particular, when $n=5$, there are exactly $12$ line hyperovals,  each of which contains $l_0$ and another line not equaling $l_\infty$.
	\end{itemize}
\end{corollary}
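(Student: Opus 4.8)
The plan is to derive both assertions from the correspondence established in Theorem \ref{th:classical_line_hyperovals_b}, which turns a type (b) translation hyperoval $\Oval_L$ in $\Pi(\K_n)$, determined by a linearized polynomial $L$ and a point $(\alpha)$, into a type (b) line hyperoval $\cL_c$ in $\Pi(\K_n^{td})$ whose lines are the $l_{\bar{L}(m),m}$ together with the carrier lines $l_0$ and $l_\alpha$. Since that theorem is an \emph{if and only if} statement, the map $\Oval_L \mapsto \cL_c$ is a bijection between the two families, so the whole corollary reduces to two things: identifying the adjoint $\bar{L}$ in the single case coming from Theorem \ref{th:hyperoval_g1g2}, and transporting the complete count for $n=5$ across this bijection.

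For the first bullet I would apply Theorem \ref{th:classical_line_hyperovals_b} to the hyperoval $\Oval_g$ of Theorem \ref{th:hyperoval_g1g2}, where $L(y)=g(y)=y^2+y$ and $\alpha=1$. Writing $L(X)=a_0 X + a_1 X^2$ with $a_0=a_1=1$, the adjoint formula $\bar{L}(X)=\sum_i a_i^{2^{n-i}}X^{2^{n-i}}$ gives $\bar{L}(X)=X^{2^n}+X^{2^{n-1}}$. Using $X^{2^n}=X$ on $\F_q$, together with the fact that $X\mapsto X^{2^{n-1}}$ is the square-root map (its square is $X^{2^n}=X$), this simplifies to $\bar{L}(X)=X+\sqrt{X}$. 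Hence $\bar{L}(m)=m+\sqrt{m}$ and the carrier set is $\{l_0,l_1\}$, so $\cL_c=\{l_{m+\sqrt{m},m}:m\in\F_q\}\cup\{l_0,l_1\}$ is exactly the line hyperoval claimed.

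For the second bullet I would use the same bijection over $\K_5$. A line hyperoval of $\Pi(\K_5^{td})$ that contains $l_0$ together with a line $l_\alpha\neq l_\infty$ is precisely one of type (b), so these are exactly the images under Theorem \ref{th:classical_line_hyperovals_b} of the type (b) translation hyperovals of $\Pi(\K_5)$, which are listed completely in Table \ref{ta:hyperoval_b}. There being $12$ inequivalent such hyperovals, I would conclude that there are exactly $12$ line hyperovals of the stated shape, provided the correspondence respects equivalence.

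The hard part will be justifying that this last step descends to equivalence classes rather than merely to individual objects. The bijection $L\mapsto\bar{L}$ is built from the adjoint with respect to the trace form, which is precisely the geometric transpose-dual operation relating $\Pi(\K_n)$ and $\Pi(\K_n^{td})$; one has to check that it intertwines the action of the translation-and-shear group used to define type (b) on the point side with the action of the group $\Gamma_{(\infty),(\infty)}$ used to define type (b) on the line side. This is where the earlier remark that the autotopism group of $\K_n^{td}$ coincides with that of $\K_n$ becomes essential: it guarantees that the full collineation groups of the two planes match up under the correspondence, so that equivalent type (b) hyperovals are carried to equivalent line hyperovals and conversely, and the count of $12$ inequivalent classes is preserved.
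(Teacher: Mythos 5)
Your proposal is correct and follows essentially the same route as the paper, which obtains the corollary by applying Theorem \ref{th:classical_line_hyperovals_b} to the hyperoval of Theorem \ref{th:hyperoval_g1g2} (the adjoint of $y^2+y$ being $m+\sqrt{m}$, with carrier $\{l_0,l_1\}$) and to the twelve type (b) hyperovals of Table \ref{ta:hyperoval_b}. Your added care about the correspondence descending to equivalence classes is a point the paper leaves implicit, but it does not change the argument.
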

From the proof, it is obvious that Theorems \ref{th:classical_line_hyperovals} and \ref{th:classical_line_hyperovals_b} also hold for any translation hyperovals in an arbitrary semifield plane of characteristic $2$.

\section{Hyperovals in Symplectic planes}\label{se:symplectic_hyperoval}
In the symplectic presemifield plane $\Pi(\K_n^{td})$, we cannot find any translation hyperoval of type (a), which contains the points $(0)$ and $(\infty)$. However we still can construct translation hyperovals of type (b) with the points $(0)$ and $(\alpha)$ on $l_\infty$. First we give an infinite family of them.
\begin{theorem}\label{th:hyperoval_symplectic}
	Let $d$ and $n$ be relatively prime positive integers and $q:=2^n$. Then the following set
	\[\Oval_d := \{ (y^{2^d}+y, y): y\in \F_q \} \cup \{(0), (1)\} \]
	is a hyperoval in $\Pi(\K_n^{td})$.
\end{theorem}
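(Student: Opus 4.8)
The plan is to verify the hyperoval property straight from the definition, showing that every line of $\Pi(\K_n^{td})$ meets $\Oval_d$ in $0$ or $2$ points, exactly along the lines of the proof of Theorem~\ref{th:hyperoval_g1g2}. Writing $g(y)=y^{2^d}+y$, the map $g$ is additive and, since $\gcd(d,n)=1$, its kernel $\{y:y^{2^d}=y\}=\F_2$ has size $2$, so $g$ is two-to-one. This already disposes of the line $l_\infty$ (which meets $\Oval_d$ in $(0)$ and $(1)$) and of the pencil of lines $l_a$ with $a\in\F_q$ through $(\infty)\notin\Oval_d$, each of which meets $\Oval_d$ in $\#\{y:g(y)=a\}\in\{0,2\}$ affine points.

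For the remaining lines $l_{a,b}$, the number of affine intersection points equals $\#\{y:\Phi_a(y)=b\}$, where $\Phi_a(y):=y+g(y)\circ a$. First I would simplify $\Phi_a$ using Frobenius invariance of the trace: since $\Tr(g(y))=\Tr(y^{2^d})+\Tr(y)=0$, the middle term of $x\circ a$ vanishes, and with $g(y)^2=y^{2^{d+1}}+y^2$ one gets
\[\Phi_a(y)=\underbrace{(1+a)\,y+a\,y^{2^d}}_{=:\psi_a(y)}+\underbrace{\Tr\!\big(a(y^{2^{d+1}}+y^2)\big)}_{=:\epsilon(y)\in\F_2}.\]
The two carrier pencils are then immediate: for $a=0$ one finds $\Phi_0=\mathrm{id}$, and for $a=1$ one has $\epsilon\equiv0$ together with $(1+1)y=0$, hence $\Phi_1(y)=y^{2^d}$; both are bijections, so each line $l_{0,b}$ and $l_{1,b}$ (passing through the points $(0),(1)\in\Oval_d$) carries exactly one affine point of $\Oval_d$, i.e.\ meets it in $2$ points.

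The crux is the case $a\in\F_q\setminus\{0,1\}$, where I must show $\#\ker\Phi_a=2$. Here $\psi_a$ is $\F_2$-linear and, because $\gcd(d,n)=1$ forces $y\mapsto y^{2^d-1}$ to be a bijection of $\F_q^*$ while $(1+a)/a\neq0$, its kernel $\{0,y_0\}$ has size exactly $2$. The obstruction is the $\F_2$-valued term $\epsilon$, which prevents treating $\Phi_a$ as an ordinary linearized polynomial. I would circumvent it by restricting to $W:=\psi_a^{-1}(\F_2)$: any $y\in\ker\Phi_a$ satisfies $\psi_a(y)=\epsilon(y)\in\F_2$, so $\ker\Phi_a\subseteq W$. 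Since $\psi_a(1)=(1+a)+a=1$, the set $W=\{0,y_0\}\cup\{1,1+y_0\}$ is a two-dimensional $\F_2$-space of four distinct elements, and on $W$ both $\psi_a$ and $\epsilon$ are $\F_2$-valued, so $\Phi_a|_W\colon W\to\F_2$ is $\F_2$-linear. The decisive point is that this functional is \emph{nonzero}: because $g(1)=0$ we have $\Phi_a(1)=1+g(1)\circ a=1$. Hence $\ker(\Phi_a|_W)$ is one-dimensional, and as $\ker\Phi_a=\ker\Phi_a\cap W=\ker(\Phi_a|_W)$ we conclude $\#\ker\Phi_a=2$. Thus every $l_{a,b}$ with $a\neq0,1$ meets $\Oval_d$ in $0$ or $2$ affine points, and since $(a)\notin\Oval_d$ this completes the verification.

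In short, the only genuine difficulty is the scalar trace term $\epsilon$; once one notices the two Frobenius identities $\Tr(g(y))=0$ and $g(1)=0$ (the latter giving $\Phi_a(1)=1$ and $\epsilon(1)=0$) and uses $\gcd(d,n)=1$ to pin down $\#\ker\psi_a=2$, the reduction to the two-dimensional space $W$ makes the count $\#\ker\Phi_a=2$ automatic.
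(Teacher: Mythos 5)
Your proof is correct, but the way you handle the decisive case $a\notin\{0,1\}$ is genuinely different from the paper's. Both arguments reduce to counting $\#\{y: y+(y^{2^d}+y)\circ a=0\}$ and both use $\Tr(y^{2^d}+y)=0$ to strip the middle term of $\circ$, but from there the paper reverses the roles of the two variables: for each fixed $y\in\F_q\setminus\{0,1\}$ it solves the equation for the unique admissible $m$, obtaining an explicit map $\mu(y)$ equal to $y/(y^{2^d}+y)$ or $(y+1)/(y^{2^d}+y)$ according to the value of $\Tr((y^{2^d}+y)y)$, and then proves $\mu$ is a permutation of $\F_q\setminus\{0,1\}$ via $\gcd(2^d-1,2^n-1)=1$ together with a parity argument on that trace. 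You instead keep $a$ fixed and view $\Phi_a=\psi_a+\epsilon$ as a linearized polynomial perturbed by an $\F_2$-valued additive term: the kernel is trapped inside the four-element subspace $W=\psi_a^{-1}(\F_2)$ (which has size exactly $4$ because $\#\ker\psi_a=2$, again from $\gcd(2^d-1,2^n-1)=1$, and $\psi_a(1)=1$), and on $W$ the map $\Phi_a$ is a nonzero $\F_2$-linear functional since $\Phi_a(1)=1$, forcing $\#\ker\Phi_a=2$. Your route is shorter and more structural: it avoids the case split on the trace, the verification that $\mu$ is well defined, and the injectivity argument, and it delivers the exact kernel size in one stroke; the paper's route has the side benefit of producing the explicit correspondence $\mu$ between the affine points of $\Oval_d$ and the slopes of the secants through the origin. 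Both hinge on the same two facts, namely $\gcd(2^d-1,2^n-1)=1$ and the vanishing of $\Tr(y^{2^d}+y)$, and your additional observation $g(1)=0$ (hence $\Phi_a(1)=1$) is exactly what replaces the paper's permutation argument.
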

\begin{proof}
	Again we only have to show that,  for every $m\in \F_q\setminus \{0\}$, there is at most one $y\in \F_q^*$ such that
	\[(y^{2^d}+y)\circ m + y=0,\]
	i.e.
	\begin{equation}\label{eq:hyperoval_symplectic_main}
		(y^{2^d}+y)m + \Tr((y^{2^{d+1}}+y^2)m) + y=0.
	\end{equation}
	
	When $m=1$, \eqref{eq:hyperoval_symplectic_main} becomes
	\[(y^{2^d}+y)+y=0,\]
	which has exactly one root $0$. This implies that the line $l_{1,0}$ of $\Pi(\K_n^{td})$ intersects $\Oval_d$ in the affine point $(0,0)$ and in $(1)$. 
	
	Instead of showing that  for every $m\in \F_q\setminus \{0,1\}$, there is exactly one $y\in \F_q^*$ satisfying \eqref{eq:hyperoval_symplectic_main}, we proceed to show that, for each $y\in \F_{q}\setminus \{0,1\}$, there is a unique $m\in \F_{q}\setminus \{0,1\}$ satisfying \eqref{eq:hyperoval_symplectic_main}. Denote this $m$ by $\mu(y)$. If we can further show that $\mu: \F_{q}\setminus \{0,1\}\rightarrow \F_{q}\setminus \{0,1\}$ is a permutation, then we finish the proof.
	
	For a given $y\in \F_{q}\setminus \{0,1\}$, if $\Tr((y^{2^{d+1}}+y^2)m)=0$, then from \eqref{eq:hyperoval_symplectic_main} we get
	\[m=\frac{1}{y^{2^d-1}+1};\]
	otherwise,
	\[m=\frac{y+1}{y^{2^d}+y}.\]
	However, in both cases, it is not difficult to see that 
	\[\Tr((y^{2^{d+1}}+y^2)m)=\Tr((y^{2^d} + y)\cdot y),\]
	which means that there is a unique $m$ such that \eqref{eq:hyperoval_symplectic_main} holds. In another word, 
	\[
	\mu(y) = \left\{
	  \begin{array}{ll}
	  \frac{y}{y^{2^d}+y},& \hbox{if $\Tr((y^{2^d}+y) \cdot y)=0$}; \\
	  \frac{y+1}{y^{2^d}+y},& \hbox{if $\Tr((y^{2^d}+y) \cdot y)=1$}.
	  \end{array}
	\right.
	\]
	
	Finally, we proceed to show that $\mu$ is a permutation on $\F_{q}\setminus \{0,1\}$. Assume that there exists $y,z\in \F_q\setminus{0,1}$ such that $\mu(y)=\mu(z)$. Without loss of generality, we assume that $\Tr((y^{2^d}+y) \cdot y)=0$. i.e.\ $\mu(y)=\frac{y}{y^{2^d}+y}$; for those $y$ satisfying $\mu(y)=\frac{y+1}{y^{2^d}+y}$, we can get a result similar to the rest of this proof.
	
	If $\mu(z)=\frac{z}{z^{2^d}+z}$, then
	\[\frac{z}{z^{2^d}+z}=\frac{y}{y^{2^d}+y},\]
	from which we can derive that $y^{2^d-1}=z^{2^d-1}$. It means that $y=z$ due to $\gcd(2^d-1,2^n-1)=1$.
	
	If $\mu(z)=\frac{z+1}{z^{2^d}+z}$, i.e.\ $\Tr((z^{2^d}+z) \cdot z)=1$, then we take $w:= z+1$. That means
	\[\mu(z)=\frac{w}{w^{2^d}+w}=\frac{y}{y^{2^d}+y}.\]
	Again, according to $\gcd(2^d-1,2^n-1)=1$, we get $y=w=z+1$. It implies that
	\[\Tr((y^{2^d}+y) \cdot y)=\Tr((w^{2^d}+w) \cdot w)=\Tr((z^{2^d}+z) \cdot z + (z^{2^d}+z))=1,\]
	which is a contradiction to $\Tr((y^{2^d}+y) \cdot y)=0$.
\end{proof}

\begin{theorem}
	Let $n$, $d$ and $e$ be in $\Z^+$ such that $\gcd(n,d)=\gcd(n,e)=1$, and $q=2^n$. Then the hyperovals $\Oval_d$ and $\Oval_e\in \Pi(\K_n^{td})$ are equivalent if and only if $d\equiv \pm e\pmod{n}$.
\end{theorem}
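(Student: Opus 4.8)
The plan is to prove the equivalence statement in two directions. The ``if'' direction is the easier one: I would exhibit explicit collineations of $\Pi(\K_n^{td})$ sending $\Oval_d$ to $\Oval_e$ whenever $d\equiv\pm e\pmod n$. The Frobenius autotopism $\sigma$ (Lemma~\ref{lm:autotopism_knuth}), which acts as $(x,y)\mapsto(x^2,y^2)$, generates the full autotopism group of $\K_n^{td}$ (noted in the excerpt to coincide with that of $\K_n$), and applying $\sigma^j$ to $\Oval_d$ replaces the defining map $y^{2^d}+y$ by $y^{2^d}+y$ evaluated at squared coordinates, which after reindexing still defines $\Oval_d$ with the same carrier set $\{(0),(1)\}$; so Frobenius alone does not change $d$. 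The substitution $d\mapsto n-d$ must come from a different source: I would replace $y$ by $y^{2^{-d}}=y^{2^{n-d}}$ in the set $\{(y^{2^d}+y,y)\}$, which turns $y^{2^d}+y$ into $y+y^{2^{n-d}}=y^{2^{n-d}}+y$ up to the relabeling of the running variable, showing directly that $\Oval_d=\Oval_{n-d}$ as point sets (the carrier $\{(0),(1)\}$ is preserved). Thus $d\equiv\pm e\pmod n$ already gives equality or an explicit collineation.

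For the ``only if'' direction I would argue that any collineation carrying $\Oval_d$ to $\Oval_e$ must, by Theorem~\ref{th:automorphism_semifieldplane} and Lemma~\ref{lm:autotopism_knuth}, be a composition of a translation, a shear, and a power $\sigma^j$ of Frobenius, since these generate $\Aut(\Pi(\K_n^{td}))$. Both hyperovals are of type (b) with the same pair of infinite points $\{(0),(1)\}$; I would first use the structure of $\Aut(\Pi(\K_n^{td}))$ fixing $(\infty)$ and its action on $l_\infty$ to pin down which translations and shears can preserve the carrier set $\{(0),(1)\}$, reducing the admissible collineations to a small group. After normalizing away the translation (using that $(0,0)$ lies on both ovals) and the shear (using the two fixed infinite points), the remaining freedom is exactly the Frobenius powers $\sigma^j$ together with the coordinate-swap symmetry $d\mapsto n-d$ identified above. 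Comparing the defining polynomials $y^{2^d}+y$ and $y^{2^e}+y$ under $\sigma^j$, which sends $y^{2^d}+y$ to $y^{2^d}+y$ (Frobenius commutes with the additive map and only renames the variable), forces $2^d\equiv 2^e$ or $2^d\equiv 2^{n-e}\pmod{2^n-1}$ at the level of exponents, i.e.\ $d\equiv\pm e\pmod n$.

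The main obstacle, and the step I would spend the most care on, is the reduction of an arbitrary plane collineation to the shape ``Frobenius $\times$ coordinate-swap'' while controlling the shear contribution. The shear group $\autfix{(\infty)}{l_0}$ acts nontrivially on the affine description of type (b) hyperovals (as already seen in Section~\ref{se:commutative}, where a shear sent $\Oval_g$ to $\Oval_{g'}$), so I cannot simply discard it: I must verify that no shear, when composed with a translation and a Frobenius power, can map $\Oval_d$ onto $\Oval_e$ for $d\not\equiv\pm e$. Concretely, a general element of $\Sigma\rtimes\Aut(\K_n^{td})$ acting on the affine point $(y^{2^d}+y,y)$ produces a linearized expression in $y$, and requiring this to coincide with the parametrization $(z^{2^e}+z,z)$ of $\Oval_e$ forces a system of equations on the coefficients; the heart of the argument is showing this system is solvable only in the claimed cases. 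I expect the cleanest route is to exploit the two-to-one structure of the defining maps together with the rigidity coming from the fixed point $(\infty)$ and the forced preservation of the carrier set $\{(0),(1)\}$, so that the combinatorics of the exponents reduces cleanly to the condition $d\equiv\pm e\pmod n$.
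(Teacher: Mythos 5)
Your overall strategy coincides with the paper's: decompose an arbitrary collineation as translation $\times$ shear $\times$ Frobenius via Theorem~\ref{th:automorphism_semifieldplane} and Lemma~\ref{lm:autotopism_knuth}, observe that Frobenius fixes each $\Oval_e$ setwise, use the preserved carrier set $\{(0),(1)\}$ to constrain the shear, and then compare linearized parametrizations. However, there is a concrete error at the heart of your ``if'' direction. You claim that substituting $y\mapsto y^{2^{n-d}}$ shows $\Oval_d=\Oval_{n-d}$ \emph{as point sets}. A substitution in the running variable is a bijective reparametrization and never changes the point set: $\{(y^{2^d}+y,\,y):y\in\F_q\}$ becomes $\{(y+y^{2^{n-d}},\,y^{2^{n-d}}):y\in\F_q\}$, and writing $z=y^{2^{n-d}}$ returns you to $\{(z^{2^d}+z,\,z)\}$, i.e.\ to $\Oval_d$ again. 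Equality of the two sets would force $y^{2^d}=y^{2^{n-d}}$ identically, i.e.\ $n\mid 2d$, which is impossible for odd $n>1$ with $\gcd(d,n)=1$. The collineation that actually realizes $d\mapsto -d$ is the nontrivial shear interchanging $(0)$ and $(1)$, namely $(x,y)\mapsto(x,\,y+x\circ 1)=(x,\,y+x)$: it sends $(y^{2^d}+y,\,y)$ to $(y^{2^d}+y,\,y^{2^d})$, and putting $z=y^{2^d}$ gives $(z+z^{2^{n-d}},\,z)$, which is $\Oval_{n-d}$. This is exactly the mechanism you yourself cite from Section~\ref{se:commutative} (the shear carrying $\Oval_g$ to $\Oval_{g'}$), but you treat the shear only as an obstacle to be ``normalized away'' rather than as the source of the $\pm$ in the statement.

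This error propagates into your ``only if'' direction, where the ``remaining freedom'' after normalization is described as Frobenius powers together with the (incorrectly identified) coordinate-swap; moreover the decisive computation is left as a plan. The paper completes it as follows: since shears act on $l_\infty\setminus\{(\infty)\}$ by $(z)\mapsto(z+c)$ and the carrier set $\{(0),(1)\}$ must be preserved, the shear $\eta$ is either the identity or the swap with $c=1$. In the first case one needs $(y+b)^{2^e}+(y+b)=y^{2^d}+y+a$ for all $y$, forcing $e=d$ (and $b^{2^e}+b=a$); in the second case one needs $(y^{2^d}+a+b)^{2^e}+y^{2^d}+a+b=y^{2^d}+y+a$ for all $y$, forcing $e\equiv -d\pmod n$. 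You would need to carry out these two computations explicitly (and correct the ``if'' direction accordingly) for the proof to stand.
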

\begin{proof}
	Assume that hyperoval $\Oval_d$ and $\Oval_e$ are equivalent. By definition, there exists $\varphi\in \Aut(\Pi(\K_n^{td}))$ such that $\varphi(\Oval_d)=\Oval_e$. By Theorem \ref{th:automorphism_semifieldplane}, there exists a translation $\tau$, a shear $\eta$ and $\gamma\in \Aut(\bbS)$, such that $\varphi(P)=\gamma (\eta (\tau(P)))$ for all point $P\in \Pi(\K_n^{td})$. Hence, we can write
	\begin{equation}\label{eq:equivalence_last}
		\eta(\tau(\Oval_d))=\gamma^{-1} (\Oval_e). 
	\end{equation}
	
	Noting that the autotopism group $\Aut(\K_n^{td})$ is the same as $\Aut(\K_n)$, the point $(y^{2^e}+y, y)$ in $\Oval_e$ is mapped to $(y^{2^{e+1}}+y^2, y^2)$ and the point $(1)$ is fixed under $\gamma^{-1}$. Hence $\gamma^{-1}(\Oval_e)=\Oval_e$.
	
	As $\Oval_d\cap l_\infty=\Oval_e\cap l_\infty=\{(0), (1)\}$, $\eta$ must be either the identity map or it interchanges the points $(0)$ and $(1)$. 
	
	Suppose that $\tau(x,y) := (x+a,y+b)$. If $\eta$ is the identity map, then from \eqref{eq:equivalence_last} we have
	\[(y^{2^d}+y+a, y+b)=(z^{2^e} + z, z),\]
	which means $(y+b)^{2^e} + (y+b)=y^{2^d}+y+a$ for all $y\in \F_q$. It holds if and only if $e=d$ and $b^{2^e}+b=a$.
	
	If $\eta$ interchanges the points $(0)$ and $(1)$, then from \eqref{eq:equivalence_last} we have
		\[(y^{2^d}+y+a, y+b + (y^{2^d}+y+a)\circ 1)=(z^{2^e} + z, z),\]
	i.e.
	\[(y^{2^d}+y+a, y^{2^d}+a+b)=(z^{2^e} + z, z).\]
	It is equivalent to 
	\[ (y^{2^d}+a+b)^{2^e} + y^{2^d}+a+b = y^{2^d}+y+a,\]
	for all $y$. It holds if and only if $e\equiv -d \pmod{n}$ and $b^{2^d}+b=a$.	
\end{proof}

By using MAGMA program, we show that there is no translation hyperoval of type (a) and $10$ inequivalent translation hyperovals of type (b) in $\Pi(\K^{td}_5)$. The corresponding functions $f_b$ and the points $(\alpha)$ are listed in Table \ref{ta:hyperoval_c}.

\begin{table}[h!]
\caption{Functions defining translation hyperovals of type (b) in $\Pi(\K_5^{td})$}
\label{ta:hyperoval_c}
\centering{
	\begin{tabular}{|c|c|l|}
	\hline No. &  $\alpha$ &  {Functions}  \\
	\hline
	$1$ & $1$ 	 & $y^2 + y$, \Tstrut \\
	$2$ &		 & $y^4 + y$  \Tstrut\\ \hline
	$3$ &$\omega$ & $\omega^{24} y^{16} + \omega^{18} y^8 + \omega y^4 + \omega^{29} y^2 + \omega^{15} y$, \Tstrut \\
	$4$ &		 & $\omega^{17} y^{16} + \omega^{12} y^8 + \omega^{17} y^4 + \omega^5 y^2 + \omega^{13} y$, \Tstrut\\ 
	$5$ &	     & $\omega^7 y^{16} + \omega^{24} y^8 + \omega^4 y^4 + \omega^{14} y^2 + \omega^5 y$, \Tstrut\\
	$6$ &	     & $\omega^{28} y^{16} + \omega^7 y^8 + \omega^{23} y^4 + \omega^{27} y^2 + y$ \Tstrut\\ \hline	
	
	$7$ & $\omega^3$ & $\omega^5 y^{16} + \omega^5 y^8 + y^4 + \omega^{28} y^2 + \omega^{24} y$ \Tstrut \\ \hline
	
	$8$ & $\omega^5$ & $\omega^{19} y^{16} + \omega^{27} y^8 + \omega^{26} y^4 + \omega^{25} y^2 + \omega^{22} y$ \Tstrut \\ \hline
	
	$9$ & $\omega^{15}$ & $\omega^5 y^{16} + \omega y^8 + \omega^{21} y^4 + \omega^{29} y^2 + \omega^{18} y$, \Tstrut \\
	$10$ &			  &	$\omega^{30} y^{16} + \omega^{13} y^8 + \omega^4 y^4 + \omega^{10} y^2 + \omega^{18} y$  \Tstrut\\	\hline
	\multicolumn{3}{|c|}{\small Here the primitive element $\omega$ satisfies $\omega^5 + \omega^2 + 1=0$.}	\Tstrut\\\hline 
	\end{tabular}
}
\end{table}

\section{Designs associated with hyperovals}\label{se:designs}
Let $\Pi$ be a projective plane of even order $q$ and $\Oval$ a hyperoval in it. We can derive several different type of designs from $\Oval$. For instance, Maschietti in \cite{maschietti_hyperovals_1992} showed that, for each hyperoval,  there is always an associated Hadamard $2$-design with parameters $(q^2-1, q^2/2, q^2/4)$. When the hyperoval is defined by a monomial and the plane is desarguesian, Maschietti constructed an associated $(q-1,q/2-1, q/4-1)$-difference set in \cite{maschietti_difference_1998}; see \cite{dillon_new_2004,maschietti_hadamard_1995} too. A $(v,k,\lambda)$-difference set is a $k$ subset of a group $G$ of order $v$, such that each nonidentity element in $G$ can be written as $d_1d_2^{-1}$ with $d_1,d_2\in D$ in exactly $\lambda$ ways; see \cite{beth_design_1999,pott_finite_1995} for more details.

In this section, for an arbitrary hyperoval of type (a) in a semifield plane, we consider another associated symmetric design with parameter $(q^2, {q^2}/{2} + {q}/{2},{q^2}/{4} + {q}/{2})$ and the corresponding difference sets in two different abelian groups.

\begin{lemma}\label{lm:intersection_ovals}
	Let $\Oval_1$ and $\Oval_2$ be two hyperovals in a projective plane $\Pi$. Assume that $\Oval_1$ and $\Oval_2$ have exactly two common points $P$ and $Q$. Let $\rS(\Oval_1)$ and $\rS(\Oval_2)$ be the secant lines of $\Oval_1$ and $\Oval_2$ respectively. Let $\mathscr{P}(P)$ be the pencil of lines through point $P$. Then
	\begin{equation}\label{eq:cardinality_secant_lines}
		\#(\rS(\Oval_1)\setminus\mathscr{P}(P)) = \frac{q(q+1)}{2};
	\end{equation}
	\begin{equation}\label{eq:cardinality_secant_lines_two_ovals}
		\#((\rS(\Oval_1)\cap \rS(\Oval_2)) \setminus\mathscr{P}(P)) = \frac{q^2}{4} + \frac{q}{2}.
	\end{equation}
\end{lemma}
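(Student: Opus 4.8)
The plan is to argue entirely by elementary counting inside $\Pi$, relying on two standard facts about a hyperoval $\Oval$ in a projective plane of even order $q$: it has no tangent lines (every line meets $\Oval$ in $0$ or $2$ points), so through a point of $\Oval$ all $q+1$ lines are secant, while through a point off $\Oval$ exactly $\tfrac{q}{2}+1$ of the $q+1$ lines are secant (the $q+2$ points of $\Oval$ being partitioned into pairs by the secants through such a point). I would record these two facts first, since both parts of the lemma rest on them.

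For \eqref{eq:cardinality_secant_lines}, I would count the secants of $\Oval_1$ directly: since no three points of $\Oval_1$ are collinear, every pair of its $q+2$ points spans a distinct secant, giving $\binom{q+2}{2}=\tfrac{(q+2)(q+1)}{2}$ secants in total. Because $P\in\Oval_1$, every one of the $q+1$ lines through $P$ is secant, so $\mathscr{P}(P)$ accounts for exactly $q+1$ secants; subtracting yields $\tfrac{(q+2)(q+1)}{2}-(q+1)=\tfrac{q(q+1)}{2}$.

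For \eqref{eq:cardinality_secant_lines_two_ovals} the plan is to stratify the common secants by their incidence with the two shared points $P,Q$. Since $P$ and $Q$ lie on both ovals, every line through $P$ (resp. $Q$) is automatically secant to both; hence all $q+1$ lines through $P$ are common secants (these form $\mathscr{P}(P)$ and are removed), and among the $q+1$ common secants through $Q$ exactly one is the line $PQ$ while the remaining $q$ avoid $P$. It then remains to count the common secants missing both $P$ and $Q$. Writing $\Oval_2\setminus\{P,Q\}=\{B_1,\dots,B_q\}$, each $B_j$ lies off $\Oval_1$ (as $\Oval_1\cap\Oval_2=\{P,Q\}$), so $\tfrac{q}{2}+1$ secants of $\Oval_1$ pass through $B_j$; exactly two of them, namely $B_jP$ and $B_jQ$ (distinct because $B_j\notin PQ$), meet $\{P,Q\}$, leaving $\tfrac{q}{2}-1$ secants of $\Oval_1$ through $B_j$ avoiding both. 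A secant of $\Oval_1$ avoiding $P,Q$ is common precisely when it contains exactly two of the $B_j$, so double counting the incidences between such secants and the points $B_j$ gives $q\bigl(\tfrac{q}{2}-1\bigr)=2C$, whence the number $C$ of common secants avoiding $P,Q$ equals $\tfrac{q(q-2)}{4}$. Adding the $q$ common secants through $Q$ but not $P$ produces $q+\tfrac{q(q-2)}{4}=\tfrac{q^2}{4}+\tfrac{q}{2}$, as claimed.

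The one place that needs genuine care, rather than bookkeeping, is the double-counting step: I must verify that a secant of $\Oval_1$ lying off $P$ and $Q$ cannot meet $\Oval_2$ in a single point, which is exactly where the hyperoval (no-tangent) property of $\Oval_2$ is invoked, and that $B_jP\neq B_jQ$, which uses that no three points of $\Oval_2$ are collinear so $B_j\notin PQ$. Once these two observations are secured the incidence count is forced and the arithmetic closes.
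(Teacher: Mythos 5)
Your proposal is correct and follows essentially the same route as the paper's proof: count all $\binom{q+2}{2}$ secants of $\Oval_1$ and remove the $q+1$ through $P$ for the first identity, then for the second count the $\tfrac{q}{2}-1$ secants of $\Oval_1$ through each of the $q$ points of $\Oval_2\setminus\{P,Q\}$ avoiding $P$ and $Q$, divide by two, and add back the $q$ common secants through $Q$ but not $P$. The only difference is that you make explicit two small justifications (that $B_jP\neq B_jQ$ and that a common secant off $P,Q$ meets $\Oval_2\setminus\{P,Q\}$ in exactly two points) which the paper leaves implicit.
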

\begin{proof}
	For any hyperoval $\Oval$ in $\Pi$, there are totally $\frac{(q+1)(q+2)}{2}$ secant lines and $q+1$ ones through a given point on $\Oval$. Hence there are
	$\frac{(q+1)(q+2)}{2}-(q+1)$ secant lines in $\rS(\Oval_1)\setminus\mathscr{P}(P)$.
	
	Let $R$ be a point in $\Oval_2\setminus \{P,Q\}$. Then there are exactly $\frac{q+2}{2}$ lines containing $R$ secant to $\Oval_1$. Among these secant lines, there is one through $P$ and one through $Q$. Furthermore, as every secant line to $\Oval_1$ passes through exactly two points in $\Oval_2$, there are totally $\frac{q}{2} \left(\frac{q}{2}-1\right)$ lines secant to $\Oval_1$ and $\Oval_2$ simultaneously, which do not contain $P$ and $Q$. Together with the $q$ secant lines through $Q$, we get \eqref{eq:cardinality_secant_lines_two_ovals}.
\end{proof}



Now we consider the action of collineations on translation hyperovals in $\Pi(\K_n)$.

\begin{lemma}\label{lm:q2_hyperovals}
	Let $\bbS$ be a presemifield of order $q$ with multiplication $\star$. Let $\Oval$ be a translation hyperoval of type (a) in $\Pi(\bbS)$. Under the collineation group $\Sigma$ of $\Pi(\bbS)$ (see Theorem \ref{th:automorphism_semifieldplane}), $\Oval$ is mapped to $q^2$ hyperovals, every two of which meets in exactly two points, one of which is $(\infty)$.
\end{lemma}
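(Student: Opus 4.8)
The plan is to describe the orbit of a type-(a) translation hyperoval $\Oval$ under the group $\Sigma = \autfix{l_\infty}{l_\infty}\rtimes \autfix{(\infty)}{l_0}$ explicitly, then count the orbit size and compute pairwise intersections by direct reasoning about secant lines through $(\infty)$.

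First I would set up coordinates. By the type-(a) form, $\Oval = \{(x,f_a(x)) : x\in\F_q\}\cup\{(0),(\infty)\}$ with $f_a$ an additive permutation fixing $0$. The translation group $\autfix{l_\infty}{l_\infty}$ consists of the maps $(x,y)\mapsto(x+a,y+b)$ for $a,b\in\F_q$ (fixing every point of $l_\infty$), and the shear group $\autfix{(\infty)}{l_0}$ consists of maps of the form $(x,y)\mapsto(x,y+x\star c)$ with $(z)\mapsto(z+c)$ on $l_\infty$, for $c\in\F_q$. I would apply a general element $g$ of $\Sigma$ to $\Oval$ and record the image. The key structural point is that both $(\infty)$ and the carrier point of $\Oval$ on $l_\infty$ behave predictably: translations fix $l_\infty$ pointwise so they fix $(\infty)$ and $(0)$, whereas a nontrivial shear sends $(0)\mapsto(c)$ but always fixes $(\infty)$. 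Thus every image hyperoval still contains $(\infty)$, which is the common point asserted in the statement.

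Next I would establish that the orbit has size exactly $q^2$. The stabiliser of $\Oval$ inside $\Sigma$ is the obstacle to pin down, but the natural guess is that $\Sigma$ acts with a stabiliser of order $q$: indeed $|\Sigma| = q^2\cdot q = q^3$ (translations contribute $q^2$, shears contribute $q$), so an orbit of size $q^2$ corresponds to a stabiliser of order $q$. I expect the stabiliser to be precisely the subgroup of order $q$ in the translation group that acts regularly on the affine points of $\Oval$ (this subgroup exists by the very definition of a translation hyperoval). Concretely, the translations $(x,y)\mapsto(x+a,y+f_a(a))$ for $a\in\F_q$ preserve $\Oval$ because $f_a$ is additive: $(x+a,\,f_a(x)+f_a(a))=(x+a,f_a(x+a))$ lies on the graph. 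So I would verify this subgroup stabilises $\Oval$, argue no other element of $\Sigma$ does, and conclude the orbit length is $q^3/q=q^2$.

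The main obstacle, and the heart of the lemma, is the pairwise intersection count. For two distinct images $\Oval_1=g_1(\Oval)$ and $\Oval_2=g_2(\Oval)$, both pass through $(\infty)$, so I must show they meet in exactly one further point. Since $(\infty)$ lies on both, and each is a hyperoval, any secant line through $(\infty)$ meets each in one additional point; the lines through $(\infty)$ are exactly the $l_a$ for $a\in\F_q\cup\{\infty\}$. Replacing $g_2^{-1}g_1$ by a single element $h\in\Sigma$, it suffices to count common affine points of $\Oval$ and $h(\Oval)$ plus any shared point on $l_\infty\setminus\{(\infty)\}$. I would write $h$ as a translation composed with a shear, reduce the condition ``$(x,f_a(x))\in h(\Oval)$'' to a single additive (hence $\F_2$-linear) equation in $x$ using that $f_a$ and the shear map $x\mapsto x\star c$ are both additive, and show this linear equation has exactly one solution together with a matching condition on $l_\infty$ — or else no affine solution but a coincidence on $l_\infty$ — so that in total exactly two common points arise, one being $(\infty)$. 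The delicate part is handling the shear contribution on $l_\infty$ (when $h$ moves $(0)$ to $(c)$) and confirming the linear system has rank forcing a unique extra intersection; I would lean on the fact that a secant of one hyperoval meets the other in $0$ or $2$ points to rule out the image hyperovals being tangent, pinning the count at exactly two.
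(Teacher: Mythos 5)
Your proposal is correct and follows essentially the same route as the paper: apply a general element $\sigma_c\tau_{a,b}$ of $\Sigma$ to $\Oval$, observe the image depends only on the pair $(c, f_a(a)+b)$ (equivalently, that the stabiliser is the order-$q$ translation subgroup $\{(x,y)\mapsto(x+a,y+f_a(a))\}$, giving orbit length $q^3/q=q^2$), and then read off the intersection from the single equation $x\star c=f_a(a)+b$, which has a unique solution when $c\neq 0$ (carrier points then differing) and none when $c=0$ with the carrier points coinciding. The only thing worth tightening is to state explicitly that $x\mapsto x\star c$ is a bijection for $c\neq 0$ (the presemifield axiom doing the work); the closing appeal to tangency is unnecessary.
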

\begin{proof}
	Suppose that $\Oval=\{(x,f(x)): x\in \F_q\} \cup \{(0), (\infty)\}$, where $f$ is additive over $\F_q$. Recall that the collineation group $\Sigma=\autfix{l_\infty}{l_\infty}\rtimes \autfix{(\infty)}{l_0}$, each element $\tau_{a,b}\in \autfix{l_\infty}{l_\infty}$ are defined by 
	\begin{align*}
		\tau_{a,b}:~~& (x,y)\mapsto (x+a,y+b),\\
					& (z) \mapsto (z),
	\end{align*}
	for $x,y\in \F_q$ and $z\in \F_q\cup \{\infty\}$, and $\sigma_c\in \autfix{(\infty)}{l_0}$ is defined by
	\begin{align*}
		\sigma_c:~~& (x,y)\mapsto (x,y+x\star c),\\
					& (z) \mapsto (z+c),\\
					& (\infty) \mapsto (\infty),
	\end{align*}
	for $x,y,z\in \F_q$. It is clear that $\#\Sigma=q^3$.
	
	Under the element $\sigma_c\tau_{a,b}\in\Sigma$, $\Oval$ is mapped to
	\[ \{(x+a, f(x)+(x+a)\star c+b) : x\in \F_q  \}\cup \{(c), (\infty)\}, \]
	i.e.
	\[ \{(x, f(x)+x\star c + f(a) +b ) : x\in \F_q  \}\cup \{(c), (\infty)\},\]
	which we denote by $\Oval'$. It is straightforward to see that there are totally $q^2$ distinct hyperovals $\Oval'$ for all $a,b,c\in \F_q$.
	
	To consider the intersection of any two hyperovals which are obtained by applying $\Sigma$ on $\Oval$, we only have to look at the intersection of $\Oval$ and $\Oval '$, where $\Oval\neq \Oval'$. It is clear that $(\infty)$ is always in $\Oval\cap \Oval'$.
	
	First we suppose that $c\neq 0$. Considering 
	\[(x,f(x))=(x, f(x)+x\star c + f(a) +b ),\]
	we see that there is exactly affine point on $\Oval$ and $\Oval'$ simultaneously, because $x\star c + f(a) +b=0$ has a unique solution in $x$.
	
	Second we suppose that $c = 0$. As $\Oval\neq  \Oval'$, we have $f(a) +b\neq 0$ and $\Oval' \cap \Oval =\{(0), (\infty)\}$.
	
	Therefore, when $\Oval$ and $\Oval'$ are different, there are exactly two points in $\Oval\cap \Oval'$, one of which is $(\infty)$.
\end{proof}

Let us use $\U(\Oval)$ to denote the set $q^2$ hyperovals obtained in Lemma \ref{lm:q2_hyperovals}. It is readily verified that in $\Sigma$ there are two subgroups acting transitively on $\U(\Oval)$. They are
	\[G_1 := \{ \tau_{0,b}\sigma_c: b,c\in \F_q \},\]
and 
	\[G_2 := \{ \tau_{a,b}\sigma_{a}: a,b\in \F_q \}.\]
It is routine to verify that $G_1$ is isomorphic to $C_2^{2n}$ and $G_2$ is isomorphic to $C_4^n$, where $C_k$ denotes the cyclic group of order $k$.

Now, let $\bbS$ be a (pre)semifield of order $q$. Let $\cV$ denote the set of lines in $\Pi(\bbS)$ not through $(\infty)$. Assume that there is a translation hyperoval $\Oval$ of type (a) in $\Pi(\bbS)$. Let $\cB$ be defined by
	\[ \cB := \{\rS(\Oval')\setminus\mathscr{P}(\infty)  : \Oval'\in \U(\Oval)\}.  \]

Finally, let $\cD(\Oval)$ be the incidence structure whose point set is $\cV$ and whose blocks are in $\cB$, and the incidence relation is set membership. We may prove the following result.

\begin{theorem}\label{th:design_1}
	The incidence structure $\cD(\Oval)$ is a symmetric design with parameters 
	\[\left(q^2, \frac{q^2}{2} + \frac{q}{2}, \frac{q^2}{4} + \frac{q}{2}\right).\] 
	Furthermore, there are two Hadamard difference sets associated with $\Oval$ in $G_1$ and $G_2$, respectively.
\end{theorem}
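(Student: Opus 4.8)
The plan is to identify the incidence structure $\cD(\Oval)$ with the \emph{development} of a difference set inside each of the two regular groups $G_1$ and $G_2$, so that the symmetric-design property and the existence of the two difference sets fall out simultaneously. First I would pin down the three parameters. The point set $\cV$ consists of the lines of $\Pi(\bbS)$ missing $(\infty)$; since $(\infty)$ lies on exactly $q+1$ of the $q^2+q+1$ lines, we have $\#\cV=q^2$, namely the lines $l_{a,b}$ with $a,b\in\F_q$. Every $\Oval'\in\U(\Oval)$ contains $(\infty)$ by Lemma \ref{lm:q2_hyperovals}, so taking $P=(\infty)$ in \eqref{eq:cardinality_secant_lines} shows each block $\rS(\Oval')\setminus\mathscr{P}(\infty)$ has size $k=\tfrac{q(q+1)}{2}=\tfrac{q^2}{2}+\tfrac{q}{2}$. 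By the same lemma any two distinct members of $\U(\Oval)$ meet in exactly two points, one of which is $(\infty)$; applying \eqref{eq:cardinality_secant_lines_two_ovals} with $P=(\infty)$ then gives that any two distinct blocks meet in exactly $\lambda=\tfrac{q^2}{4}+\tfrac{q}{2}$ points. Since $\lambda<k$, distinct hyperovals yield distinct blocks, so $\#\cB=\#\U(\Oval)=q^2=\#\cV$.

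Next I would make the two regular actions explicit. Because every collineation in $\Sigma$ fixes $(\infty)$ and carries secants to secants, each $g\in G_i$ sends $\rS(\Oval')\setminus\mathscr{P}(\infty)$ to $\rS(g\Oval')\setminus\mathscr{P}(\infty)$; hence $G_i$ permutes $\cB$, and it does so transitively (being transitive on $\U(\Oval)$) and therefore regularly, as $\#G_i=q^2=\#\cB$. On the point side a direct computation shows $\sigma_c\colon l_{a,b}\mapsto l_{a+c,b}$ while each translation shifts the constant term of $l_{a',b'}$, whence both $G_1$ and $G_2$ act regularly on $\cV$. Fixing the base block $B_0:=\rS(\Oval)\setminus\mathscr{P}(\infty)$, the identity $g\bigl(\rS(\Oval)\setminus\mathscr{P}(\infty)\bigr)=\rS(g\Oval)\setminus\mathscr{P}(\infty)$ shows that, after identifying $\cV$ with $G_i$ through the regular action, the block set $\cB$ is exactly the orbit $\{gB_0:g\in G_i\}$.

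Finally I would invoke the standard dictionary between symmetric designs and difference sets \cite{beth_design_1999,pott_finite_1995}. With $\cV$ identified with $G_i$ and $B_0\subseteq G_i$, the number $\#(B_0\cap gB_0)$ equals the number of representations of $g$ as a ratio of two elements of $B_0$; since $G_i$ is regular on $\cB$ we have $gB_0\neq B_0$ for every $g\neq 1$, so the constant value $\lambda$ computed above says precisely that each non-identity element of $G_i$ has exactly $\lambda$ such representations. Hence $B_0$ is a difference set in $G_i$ with parameters $\left(q^2,\tfrac{q^2}{2}+\tfrac{q}{2},\tfrac{q^2}{4}+\tfrac{q}{2}\right)$, which is of Hadamard type $(4N^2,2N^2+N,N^2+N)$ with $N=q/2$; taking $i=1,2$ yields the two difference sets in $G_1\cong C_2^{2n}$ and $G_2\cong C_4^n$. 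Consequently $\cD(\Oval)$ is the development of $B_0$ and is a symmetric $2$-design with parameters $\left(q^2,\tfrac{q^2}{2}+\tfrac{q}{2},\tfrac{q^2}{4}+\tfrac{q}{2}\right)$.

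The genuinely nontrivial combinatorics — the block size and the constant pairwise intersection — are already delivered by Lemmas \ref{lm:intersection_ovals} and \ref{lm:q2_hyperovals}, so what remains is bookkeeping. The point I expect to require the most care is the verification that $G_1$ and $G_2$ act regularly on $\cV$ in a manner compatible with their action on $\cB$, i.e.\ that under one and the same identification $\cV\cong G_i$ the blocks become honest group translates of $B_0$. It is exactly this compatibility that converts the constant intersection number $\lambda$ into the difference-set condition and, at the same time, certifies that $\cD(\Oval)$ is a genuine symmetric design rather than merely a $1$-design whose blocks happen to meet pairwise in a constant.
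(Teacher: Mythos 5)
Your proof is correct and follows essentially the same route as the paper: the parameters come from Lemma \ref{lm:intersection_ovals} (with $P=(\infty)$) together with Lemma \ref{lm:q2_hyperovals}, and the difference sets arise by identifying $\cV$ with $G_i$ via the sharply transitive action and reading off the preimage of the base block $\rS(\Oval)\setminus\mathscr{P}(\infty)$. The only cosmetic difference is that the paper passes to the complement to exhibit the parameters $\left(q^2, \tfrac{q^2}{2}-\tfrac{q}{2}, \tfrac{q^2}{4}-\tfrac{q}{2}\right)$, whereas you observe directly that $\left(q^2, \tfrac{q^2}{2}+\tfrac{q}{2}, \tfrac{q^2}{4}+\tfrac{q}{2}\right)$ is already of Hadamard type $(4N^2,2N^2+N,N^2+N)$ with $N=q/2$; both conventions are standard.
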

\begin{proof}
	It is easy to see that there are totally $q^2$ lines in $\cV$. Taking $P=(\infty)$ in Lemma \ref{lm:intersection_ovals}, we see that every block in $\cB$ is of size $\frac{q^2}{2} + \frac{q}{2}$ and the intersection of any two blocks is of size $\frac{q^2}{4} + \frac{q}{2}$. Therefore, we obtain a $\left(q^2, \frac{q^2}{2} + \frac{q}{2}, \frac{q^2}{4} + \frac{q}{2}\right)$-design which is symmetric.
	
	It is not difficult to show that $G_1$ and $G_2$ are both sharply transitive on the lines in $\cV$. For $i=1,2$, if we take $D_i$ to be the $ \frac{q^2}{2} + \frac{q}{2}$ elements in $G_i$ which map $l_{0,0}$ to a line in $\rS(\Oval)\setminus\mathscr{P}(\infty)$, then $D_i$ is a difference set in $G_i$. Furthermore, the complement of $D_i$ is a $\left(q^2, \frac{q^2}{2} - \frac{q}{2}, \frac{q^2}{4} - \frac{q}{2}\right)$-difference set, namely a Hadamard difference set.
\end{proof}
\begin{remark}
	In \cite{kantor_symplectic_1975}, Kantor first obtained the difference set in $G_1$ as well as the design in Theorem \ref{th:design_1} in terms of line hyperovals in translation planes. In \cite{de_resmini_arcs_2002}, de Resmini, Ghinelli and Jungnickel proved the Hadamard difference set in $G_2$ for the special case $\Oval=\Oval_s$ by looking at the maximal arcs derived from $\Oval_s$.
\end{remark}

Two designs $\cD_1$ and $\cD_2$ are \emph{isomorphic} if there is a one-to-one map $\varphi$ from the points (blocks) of $\cD_1$ to the points (blocks) of $\cD_2$ such that a point $P$ is incident a block $l$ if and only if $\varphi(P)$ is incident with $\varphi(l)$. 
By using MAGMA program, we can show that the designs associated with the $5$ hyperovals of type (a) in Table \ref{ta:hyperoval_a} are pairwise non-isomorphic.

\begin{remark}
	In  \cite{cesmelioglu_bent_2015}, \c{C}e\c{s}melio\u{g}lu, Meidl and Pott showed that every hyperoval (not necessarily translation ones), which contains the point $(\infty)$, in a translation plane can be used to construct a bent function, which generalizes a construction by Dillon in \cite{dillon_elementary_1974}; see \cite{carlet_more_2015,carlet_dillons_2011} too. Also, in Section $4$ of \cite{cesmelioglu_bent_2015}, searching for more $o$-polynomials for (pre)semifields or, in geometric terms, for more hyperovals of type $(a)$ in (pre)semifield planes, is posed as an open problem. In Sections \ref{se:commutative}, \ref{se:symplectic} and \ref{se:symplectic_hyperoval}, we found some new examples of such objects. In particular, we can derive boolean bent functions from the standard hyperovals $\Oval_s$, the hyperovals in Table \ref{ta:hyperoval_a} and Corollary \ref{coro:classical_symplectic}. A Boolean bent function on $\F_{2}^{2n}$ is equivalent to a Hadamard difference set (or its complement) in the elementary abelian $2$-group $C_2^{2n}$. It implies that, from one hyperoval containing $(\infty)$, we can derive another symmetric design with the same parameters as those obtained in Theorem \ref{th:design_1}. It is remarkable to point out that these two associated designs with the same hyperoval are not necessarily isomorphic. For instance, when the hyperoval is defined by $f(x)=x^2$ in $\Pi(\K_5)$, our MAGMA program shows that the associated two designs are not isomorphic.
\end{remark}

It is a natural question to ask whether we can also derive similar difference sets and designs from translation hyperplanes of type (b) in (pre)semifield planes. Unfortunately, the following result shows that Lemma \ref{lm:q2_hyperovals} does not hold for them, which means that we cannot follow the construction in Theorem \ref{th:design_1} to get designs or difference sets.
\begin{theorem}\label{th:q2_hyperovals_b}
	Let $\bbS$ be a presemifield of order $q$ with multiplication $\star$. Let $\Oval$ be a translation hyperoval of type (b) defined by a function $f$ over $\F_q$ and the points on $l_\infty$ are $(0)$ and $(\alpha)$. Let $\theta$ be the unique element in $\F_q$ such that $f(\theta)=0$. Under the collineation group $\Sigma$ of $\Pi(\bbS)$ (see Theorem \ref{th:automorphism_semifieldplane}), $\Oval$ is mapped to $q^2$ hyperovals. 
	
	If there exists $v\in \F_q$ such that $f(v)\star \alpha =\theta$, then every two such hyperovals meet in $0$, $2$, $4$ or $6$ points; otherwise every two such hyperovals meet in $0$, $2$ or $4$ points.
\end{theorem}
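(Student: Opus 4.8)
The plan is to reduce the statement to a single intersection count and then organise that count through the kernel of an additive composite map. Since $\Sigma$ is a group and intersection sizes are preserved by collineations, for any two images $g_1(\Oval)$ and $g_2(\Oval)$ with $g_1,g_2\in\Sigma$ one has $\#(g_1(\Oval)\cap g_2(\Oval))=\#(\Oval\cap g_1^{-1}g_2(\Oval))$, so it suffices to compute $\#(\Oval\cap\Oval')$ for $\Oval':=\sigma_c\tau_{a,b}(\Oval)$ as $a,b,c$ range over $\F_q$. First I would record the explicit shape of $\Oval'$: applying $\sigma_c\tau_{a,b}$ (in the notation of Lemma \ref{lm:q2_hyperovals}) to the affine points $(f(y),y)$ and to $(0),(\alpha)$ gives
\[\Oval'=\{(f(y)+a,\;y+b+f(y)\star c+a\star c):y\in\F_q\}\cup\{(c),(\alpha+c)\}.\]
In the same step I would verify that the orbit has size exactly $q^2$: an element $\sigma_c\tau_{a,b}$ stabilises $\Oval$ only if $\{(c),(\alpha+c)\}=\{(0),(\alpha)\}$, i.e.\ $c\in\{0,\alpha\}$; the case $c=0$ forces $a=f(b)$ and contributes exactly the $q$ elements $\tau_{f(b),b}$, while the case $c=\alpha$ forces an additive map to vanish identically on $\mathrm{Im}(f)$, impossible as soon as $q>4$. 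Hence the orbit has size $\#\Sigma/q=q^3/q=q^2$.

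Next I would split $\Oval\cap\Oval'$ into its part on $l_\infty$ and its affine part. The infinity part is $\{(0),(\alpha)\}\cap\{(c),(\alpha+c)\}$, which has size $2$ when $c\in\{0,\alpha\}$ and size $0$ otherwise. For the affine part, a point of $\Oval'$ parametrised by $u$ lies on $\Oval$ exactly when its first coordinate equals $f$ of its second coordinate; expanding by additivity of $f$ and cancelling $f(u)$ collapses this to
\[f\bigl(f(u)\star c\bigr)=a+f(b)+f(a\star c),\]
whose right-hand side $K$ is independent of $u$. Writing $\phi_c:=f\circ R_c\circ f$ (an additive map, with $R_c(x)=x\star c$), the affine intersection therefore has size $\#\ker\phi_c$ if $K\in\mathrm{Im}(\phi_c)$ and $0$ otherwise.

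Then I would compute $\#\ker\phi_c$. For $c=0$ the map $\phi_0$ is identically zero, and a nonempty affine intersection forces $K=0$, which is precisely the excluded case $\Oval'=\Oval$; for distinct hyperovals $c=0$ thus yields total intersection $2$. For $c\ne 0$ the map $R_c$ is bijective, so
\[\ker\phi_c=(R_c\circ f)^{-1}(\{0,\theta\})=\{u:f(u)=0\}\cup\{u:f(u)=s_c\},\]
where $s_c:=R_c^{-1}(\theta)$. The first set always has size $2$; the second has size $2$ or $0$ according as $s_c\in\mathrm{Im}(f)$ or not. Since $s_c\in\mathrm{Im}(f)$ is equivalent to the solvability of $f(v)\star c=\theta$, the affine intersection is $0$ or $4$ when $f(v)\star c=\theta$ is solvable and $0$ or $2$ otherwise; in all cases $\#\ker\phi_c\le 4$.

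Finally I would combine the two parts. The total never exceeds $6$, and it attains $6$ only when the infinity part equals $2$ (forcing $c\in\{0,\alpha\}$) and the affine part equals $4$; since $c=0$ gives affine part $0$ for distinct hyperovals, this forces $c=\alpha$ together with solvability of $f(v)\star\alpha=\theta$, exactly the displayed hypothesis. When that hypothesis holds the possible totals are $\{0,2,4,6\}$; when it fails, the case $c=\alpha$ gives affine part at most $2$, so $6$ never occurs and every intersection lies in $\{0,2,4\}$, the value $4$ still arising from those $c\ne 0,\alpha$ for which $f(v)\star c=\theta$ is solvable. The main obstacle I anticipate is the bookkeeping forced by $f$ being two-to-one: one must track the kernel $\{0,\theta\}$ through the composite $f\circ R_c\circ f$, and, crucially, recognise that obtaining the two common points at infinity simultaneously with a size-$4$ affine overlap pins $c$ down to $\alpha$, which is what converts the generic solvability of $f(v)\star c=\theta$ into the specific condition $f(v)\star\alpha=\theta$ appearing in the statement.
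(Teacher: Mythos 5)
Your proposal is correct and follows essentially the same route as the paper: both reduce to computing $\#(\Oval\cap\sigma_c\tau_{a,b}(\Oval))$, split off the points on $l_\infty$ (two of them exactly when $c\in\{0,\alpha\}$), and count the affine intersection as the solution set of an additive equation whose size is governed by the solvability of $f(v)\star c=\theta$. Your bookkeeping via the kernel and image of the composite $f\circ R_c\circ f$ merely replaces the paper's initial case split on whether $a\in f(\F_q)$; the underlying computation and the final characterisation of when $6$ occurs are the same.
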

\begin{proof}
	By assumption, our hyperoval is 
	\[\Oval=\{(f(x),x): x\in \F_q\} \cup \{(0), (\alpha)\},\] 
	where $\alpha \in \F_q^*$ and $f$ is two-to-one and additive over $\F_q$. Let $\beta$ be  an element not in the image of $f$. Then $\F_q= f(\F_q) \dot{\cup} (\beta + f(\F_q))$.
	
	Under an element $\sigma_c\tau_{a,b}\in\Sigma$, $\Oval$ is mapped to
	\[ \{f(x)+a, x+(f(x)+a)\star c+b) : x\in \F_q  \}\cup \{(c), (\alpha+c)\}, \]
	which we denote by $\Oval'$.
	
	Depending on the value of $a$, we consider $\Oval'$ in the following two cases:
	\begin{itemize}
		\item If $a\in f(\F_q)$, then there exists $u$ such that  $a=f(u)=f(u+\theta)$ and
		\begin{align}
		\nonumber	\Oval'&=\{ (f(x+u),  (x+u) + f(x+u)\star c + (b+u)) : x\in \F_q \}\cup \{ (c), (\alpha+c) \}\\
		\nonumber		  &=\{ (f(y),  y + f(y)\star c + (b+u)) : y\in \F_q \}\cup \{ (c), (\alpha+c) \}\\
		\label{eq:oval_orbit_1}		  &=\{ (f(y),  y + f(y)\star c + (b+u+\theta)) : y\in \F_q \}\cup \{ (c), (\alpha+c) \}.
		\end{align}
		There are exactly $q^2/2$ hyperovals in the form of \eqref{eq:oval_orbit_1}.
		\item If $a\notin f(\F_q)$, then there exists $u$ such that  $a=f(u)+\beta=f(u+\theta)+\beta$ and
		\begin{align}
		\nonumber	\Oval' &=\{ (f(y)+\beta,  y + f(y)\star c + (b+u)) : y\in \F_q \}\cup \{ (c), (\alpha+c) \}\\
		\label{eq:oval_orbit_2}		  &=\{ (f(y)+\beta,  y + f(y)\star c + (b+u+\theta)) : y\in \F_q \}\cup \{ (c), (\alpha+c) \}.
		\end{align}
		There are exactly $q^2/2$ hyperovals in the form of \eqref{eq:oval_orbit_2}.
	\end{itemize}
	
	To consider the intersection of any two hyperovals which are obtained by applying $\Sigma$ on $\Oval$, we only have to look at the intersection of $\Oval$ and $\Oval '$, where $\Oval\neq \Oval'$.
	
	By comparing $\Oval$ and \eqref{eq:oval_orbit_2}, we have
		\[
			\#(\Oval \cap \Oval' )= \left\{
			  \begin{array}{ll}
				   2, & \hbox{if $c=0$ or $\alpha$;} \\
				   0, & \hbox{otherwise.}
			  \end{array}
			\right.
		\]
	
	Similarly, comparing $\Oval$ and \eqref{eq:oval_orbit_1}, we see that
	\[
		\#(\Oval \cap \Oval' )= \left\{
		  \begin{array}{ll}
		    2\cdot \#(\{z: z\star c=b+u \text{ or } b+u+\theta \}\cap f(\F_q))+2, & \hbox{if $c=0$ or $\alpha$;} \\
		    2\cdot \#(\{z: z\star c=b+u \text{ or } b+u+\theta \}\cap f(\F_q)), & \hbox{otherwise.}
		  \end{array}
		\right.
	\]
	Moreover,
	\begin{align*}
		&\#(\{z: z\star c=b+u \text{ or } b+u+\theta \}\cap f(\F_q))\\ 
		= &\left\{
			   \begin{array}{ll}
			     1, & \hbox{if $\{y: f(y)\star c=\theta \}=\emptyset$;} \\
			      2\cdot \#(\{z: z\star c=b+u\}\cap f(\F_q)), & \hbox{otherwise.}
			   \end{array}
			 \right.
	\end{align*}	

	As $z \mapsto z\star c$ is a permutation when $c\neq 0$, we get that $\#(\Oval \cap \Oval' )\in \{0,2,4,6\}$. It is not difficult to see that, for $k\in\{0,2,4\}$, by choosing appropriate $c$ and $b+u$, we can always find a hyperoval $\Oval'$ intersecting $\Oval$ at $k$ points.
	
	In particular, $\#(\Oval \cap \Oval' )=6$ if and only if $c=\alpha$ and there exist $v,w\in \F_q$ such that 
	\[f(v)\star \alpha =\theta \quad \text{and}\quad f(w)\star \alpha =b+u. \qedhere\]
\end{proof}

It is not difficult to verify that there does not exist $v\in \F_{2^n}$ such that $(v^2+v)* 1 =1$ or $(v^{2^d}+v) \circ 1 =1$, where $\gcd(d,n)=1$. It means that, if $\Oval$ is the hyperoval $\Oval_g$ in $\Pi(\K_n)$ defined in Theorem \ref{th:hyperoval_g1g2} or $\Oval_d$ in $\Pi(\K_n^{td})$ given in Theorem \ref{th:hyperoval_symplectic}, then $\#(\Oval \cap \Oval')\neq 6$ for all the $q^2-1$ hyperovals $\Oval'$. 

Furthermore, we check all the sporadic hyperovals listed in Table \ref{ta:hyperoval_b} and Table \ref{ta:hyperoval_c} by MAGMA program. If $\Oval$ is one of the hyperovals of No.\ $3$, $8$, $10$, $12$ in Table \ref{ta:hyperoval_b} and No.\ $3$, $6$, $9$, $10$ in Table \ref{ta:hyperoval_c}, then there exists $\Oval'$ such that $\#(\Oval\cap\Oval')=6$; otherwise $\#(\Oval\cap\Oval')\in \{0,2,4\}$.

\section*{acknowledgment}
This work is supported by the Research Project of MIUR (Italian Office for University and Research) ``Strutture geometriche, Combinatoria e loro Applicazioni" 2012. The third author is supported by the Alexander von Humboldt Foundation.

\end{document}